\newtheorem{theo}{Theorem}[section]
\newtheorem{prop}[theo]{Proposition}
\newtheorem{lemm}[theo]{Lemma}
\newtheorem{coro}[theo]{Corollary}
\newtheorem{conj}[theo]{Conjecture}
\newtheorem{rem}[theo]{Remark}
\newenvironment{rema}{\begin{rem}\rm }{\end{rem}}
\def\del{\partial} \def\eps{\varepsilon}
\def\bb#1{\mathbb{#1}} \def\m#1{\mathcal{#1}}
\def\id{\mathbbm{1}}
\def\im{\operatorname{im}}
\def\ra{\rightarrow} \def\lra{\longrightarrow} \def\co{\colon\thinspace}
\def\momeg{(M,\omega)}
\def\momegp{(M',\omega')}
\def\invg{\m O^{G}}
\def\invd{\m O^{\mathrm{Diff}_0(M)}}
\def\ham#1{\mathrm{Ham}#1}
\def\symp#1{\mathrm{Symp}#1}
\def\sympo#1{\mathrm{Symp}_0 #1}
\def\diffo{\mathrm{Diff}_0(M)}
\def\osc{\mathrm{osc}\,}
\def\flux{\mathrm{Flux}}
\def\ev{\mathrm{ev_1}}
\newcommand*{\quot}[2]%
{\ensuremath{%
   \raisebox{.35ex}{\ensuremath{#1}}\big/\raisebox{-.35ex}{\ensuremath{#2}}}}
\begin{document}

\title[Pseudo-distances on symplectomorphism groups and flux theory]{Pseudo-distances on symplectomorphism groups and applications to flux theory}
\author{Guy Buss, R\'emi Leclercq}
\date{\today}
\address{Guy Buss : Mathematisches Institut, Universit\"at Bonn, Endenicher Allee 60, D-53115 Bonn, Germany}
\address{R\'emi Leclercq : Centro de An\'alise Matem\'atica, Geometria e Sistemas Din\^amicos, Instituto Superior T\'ecnico, Av. Rovisco Pais, 1049-001 Lisboa, Portugal}
\email{guybuss@math.uni-bonn.de}
\email{leclercq@math.ist.utl.pt}
\subjclass[2010]{Primary 53D05; Secondary 53D35} % voir classification - http://www.ams.org/msc/
\keywords{symplectic manifolds, symplectomorphism group, Hamiltonian diffeomorphism group, Hofer's distance, Flux group}

\begin{abstract}
   Starting from a given norm on the vector space of exact 1--forms of a compact symplectic manifold, we produce pseudo-distances on its symplectomorphism group by generalizing an idea due to Banyaga. We prove that in some cases (which include Banyaga's construction), their restriction to the Hamiltonian diffeomorphism group is equivalent to the distance induced by the initial norm on exact 1--forms. We also define genuine ``distances \textit{to} the Hamiltonian diffeomorphism group'' which we use to derive several consequences, mainly in terms of flux groups.
\end{abstract}

\maketitle

\section{Introduction}

Let $\momeg$ be a symplectic manifold. The non-degeneracy of $\omega$ induces an isomorphism between vector fields and $1$--forms on $M$ and the group of Hamiltonian diffeomorphisms of $\momeg$, denoted as usual $\ham{\momeg}$, consists of those diffeomorphisms which are induced by the flow of \textit{exact vector fields}, that is, vector fields corresponding to exact $1$--forms. In other words, $\ham{\momeg}$ is a Lie group whose Lie algebra is canonically identified with the vector space of exact $1$--forms of $M$ (or equivalently, the vector space of smooth functions up to constants, $C^\infty(M)/\bb R$).

It is naturally endowed with a well-known distance due to Hofer \cite{Hofer90}, and now called Hofer's distance, which is the Finsler distance induced by (the norm on the vector space of exact $1$--forms induced by) the $L^{\infty}$--norm on $C^\infty(M)$. The resulting geometry of the Hamiltonian diffeomorphism group has been widely studied and we will not try to exhaust here the different directions which have been explored. Let us only mention Polterovich's book \cite{Polterovich01} as an excellent introduction to the subject.

Recently, Banyaga ``extended'' Hofer's distance to the whole (connected component of the identity of the) symplectomorphism group \cite{Banyaga07}. This group consists of diffeomorphisms induced by closed (and not only exact) vector fields. The main idea behind Banyaga's construction is to pick a metric on the manifold and to (Hodge) decompose closed 1--forms as sums of harmonic and exact 1--forms. Then, exact 1--forms are dealt with as above by considering the $L^\infty$--norm, while any norm on the vector space of harmonic 1--forms does the trick (since this vector space is isomorphic to $H^1_{\mathrm{dR}}(M)$ and thus finite dimensional), see \S\ref{sec:example-banyaga} for a more detailed description of the construction.

In this paper, we generalize this idea by introducing an abstract notion of \emph{splitting seminorm} which formalizes Banyaga's construction in the most general way. Splitting seminorms lead to pseudo-distances on the symplectomorphism group which, in turn, lead to genuine distances \textit{to} the Hamiltonian diffeomorphism group. Finally, we derive from the latter consequences (mainly) in terms of flux groups.

\subsection*{The general abstract construction}

Let $\momeg$ be a closed symplectic manifold. We denote respectively by $Z^1(M)$ and $B^1(M)$ the vector spaces of closed and exact 1--forms of $M$. We fix a norm $n^B$ on $B^1(M)$, and a metric $g$ on $M$. We denote by $\|-\|_{L^2}$ the $L^2$--norm induced by $g$ on 1--forms. 

We emphasize the fact that the results we present here \emph{do not depend} on the choice of the metric $g$, since on closed manifolds $L^2$--norms induced by different metrics are equivalent. 

First, we define the notion of \emph{splitting (semi)norms} for $n^B$, that is, seminorms on $Z^1(M)$ of the form 
\begin{align*}
       n_{(\mu,c)}(\alpha) = n^B(\mu(\alpha)) + c \|\alpha - \mu(\alpha)\|_{L^2}  
\end{align*}
where $c \geq 0$ is a real number and $\mu$ is a homomorphism $\mu\co Z^1(M)\ra B^1(M)$.

We denote by $\m P X$ (respectively $\m P_x X$), the space of paths in $X$ (respectively paths in $X$, starting at $x$). Recall that we have the following one-to-one correspondences
\begin{align}\label{eq:correspondences-paths}
  \begin{split}
    \xymatrix@R=0.4cm@C=0.1cm{\relax
       \m P_{\id} \symp{(M,\omega)} \ar@{<->}[d] & \ni & \Psi=\{\psi_t\}_{t\in [0,1]} & \\
       \m P \Gamma_{\mathrm{Symp}}(TM) \ar@{<->}[d] & \ni & X^\Psi=\{X^\Psi_t\}_{t\in [0,1]} \mbox{ such that } \del_t(\psi_t) = X^\Psi_t(\psi_t)  \\
       \m PZ^1(M) & \ni & \alpha(\Psi)=\{\alpha(\Psi)_t\}_{t\in [0,1]} \mbox{ such that } \alpha(\Psi)_t = \omega(X^\Psi_t,\cdot\,)\\
%&& \mbox{with } X^\Psi=\{X^\Psi_t\}_{t\in [0,1]} \mbox{ such that } \del_t(\psi_t) = X^\Psi_t(\psi_t)
  }
  \end{split}
\end{align}
and that under this correspondence, $\m P_\id \ham{\momeg}$ corresponds to $\m PB^1(M)$.% ($\Gamma_{\mathrm{Symp}}(TM)$ is the set of symplectic vector fields on $M$.)

Thus $n_{(\mu,c)}$ naturally leads to a (pseudo-)length $\ell_{(\mu,c)}$ on $\m P_{\id} \symp{(M,\omega)}$:
$$ \ell_{(\mu,c)}(\Psi) =  \int_0^1 \Big( n^B(\mu(\alpha(\Psi)_t)) + c \| \alpha(\Psi)_t - \mu (\alpha (\Psi)_t)\|_{L^2} \Big) dt $$
which, in turn, leads to a pseudo-distance $d^s_{(\mu,c)}$ given by
$$ d_{(\mu,c)}(\id, \phi) = \inf_{\Phi \mid \ev(\Phi) = \phi}\ell_{(\mu,c)} (\Phi)\;, \qquad d_{(\mu,c)} (\phi, \psi) = d_{(\mu,c)}(\id, \phi^{-1}\psi)$$
up to a necessary symmetrization:
$$ d^s_{(\mu,c)}(\phi, \psi)= \frac 12 \left(d_{(\mu,c)}(\phi, \psi) + d_{(\mu,c)}(\psi,\phi)\right)$$
(since $d_{(\mu,c)}$ is not symmetric in general).

In what follows, we prove the following facts:
\begin{itemize}
\item[(1)] $d^s_{(\mu,c)}$ is indeed a pseudo-distance on $\sympo{\momeg}$ (Proposition \ref{lemm:prop-of-d-and-ds}).
\item[(2)] 
  \begin{itemize}
     \item [(a)] On $\ham{\momeg}$, the restriction of $d^s_{(\mu,c)}$ is bounded above by the distance induced by the restriction of $n_{(\mu,c)}$ (Proposition~\ref{prop:comparison-with-nb}).
     \item [(b)] In some cases, these two distances are even equivalent (Theorem \ref{theo:equiv-bany-hofers}). 
  \end{itemize}
\item[(3)] More surprisingly, as soon as $c$ is non-zero, the possible degeneracy of $d^s_{(\mu,c)}$ lies in $\ham{\momeg}$ (Proposition~\ref{prop:non-degeneracy}).
\end{itemize}

Concerning point (2), there are two natural ``restrictions'' of our construction to $\ham{\momeg}$: The actual \textit{restriction of the distance} $d^s_{(\mu,c)}$ to the subgroup $\ham{\momeg}$ and the distance on $\ham{\momeg}$ induced by \textit{the restriction of the norm} $n_{(\mu,c)}$ to $B^1(M)$. The two resulting distances are not equivalent in general and (2a) states that the former is bounded above by the latter while (2b) states that there are non-trivial cases where they are equivalent. 

Recall that (as claimed above), the pseudo-distances $d^s_{(\mu,c)}$'s generalize Banyaga's construction (we precisely state this fact later). In particular, the cases appearing in (2b) include Banyaga's distance and thus we answer positively a question he raised in \cite{Banyaga07}. (Actually and as we shall see, the proof of Theorem \ref{theo:equiv-bany-hofers} -- as well as the proof of Proposition~\ref{prop:non-degeneracy} -- heavily relies on ideas used by Banyaga in \cite{Banyaga07}.)

Moreover, the non-degeneracy of Banyaga's distance indicates that in some cases $d^s_{(\mu,c)}$ is a genuine distance. We do not investigate their non-degeneracy in full generality, however, in view of (3) above, we are able to derive non-degenerate distances \textit{to} $\ham{\momeg}$ as follows.
\begin{prop}\label{coro:dist2Ham}
  When $c$ is non-zero, $d_{(\mu,c)}$ induces a \emph{non-degenerate} distance to $\ham{\momeg}$, $\Delta_{(\mu,c)}\co \sympo{\momeg}\ra \bb R^+$, by the formula
  \begin{align*}
    \Delta_{(\mu,c)} (\psi) = \inf \{ d_{(\mu,c)}(\varphi,\psi)  \,|\;\varphi\in\ham{\momeg} \} \; .
  \end{align*}
This distance is bi-invariant with respect to the action of $\ham{\momeg}$ by composition and thus induces a map on the quotient 
\begin{align*}
\delta_{(\mu,c)}\co \sympo{\momeg}/\ham{\momeg} \lra \bb R^+  
\end{align*}
which vanishes only at $[\id]$. Finally, $\Delta_{(\mu,c)}$ satisfies 
\begin{align*}
\Delta_{(\mu,c)}(\psi'\psi) = \Delta_{(\mu,c)}(\psi\psi')\leq \Delta_{(\mu,c)}(\psi)+\Delta_{(\mu,c)}(\psi')  
\end{align*}
for all symplectomorphisms $\psi$ and $\psi'$ and thus, so does $\delta_{(\mu,c)}$ on equivalence classes.
\end{prop}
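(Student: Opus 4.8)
The plan is to notice that $\Delta_{(\mu,c)}$ is really a function of the flux of $\psi$ alone, after which every assertion becomes an elementary property of an induced function on the quotient $H^1_{\mathrm{dR}}(M)/\Gamma$, where $\Gamma$ is the flux group. Concretely, since $d_{(\mu,c)}(\varphi,\psi)=d_{(\mu,c)}(\id,\varphi^{-1}\psi)$ and $\varphi^{-1}$ runs through $\ham{\momeg}$ as $\varphi$ does, the element $\chi=\varphi^{-1}\psi$ runs exactly through the coset $\ham{\momeg}\,\psi$. As $\ham{\momeg}=\ker\flux$ and $\flux$ is a homomorphism onto the abelian group $H^1_{\mathrm{dR}}(M)/\Gamma$, this coset equals $\flux^{-1}(\flux(\psi))$, so
\[
  \Delta_{(\mu,c)}(\psi)=\inf\{\,d_{(\mu,c)}(\id,\chi)\mid \flux(\chi)=\flux(\psi)\,\}=:F(\flux(\psi))
\]
for a well-defined $F\co H^1_{\mathrm{dR}}(M)/\Gamma\ra\bb R^+$.

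Granting this factorization $\Delta_{(\mu,c)}=F\circ\flux$, the algebraic part is automatic. For $h_0,h_1\in\ham{\momeg}$ we have $\flux(h_0\psi h_1)=\flux(\psi)$, which is the asserted bi-invariance; in particular $\Delta_{(\mu,c)}$ is constant on cosets of the normal subgroup $\ham{\momeg}$ and descends to $\delta_{(\mu,c)}=F$ under the identification $\sympo{\momeg}/\ham{\momeg}\cong H^1_{\mathrm{dR}}(M)/\Gamma$. Because the target of $\flux$ is abelian, $\flux(\psi'\psi)=\flux(\psi\psi')$, giving $\Delta_{(\mu,c)}(\psi'\psi)=\Delta_{(\mu,c)}(\psi\psi')$. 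For subadditivity it suffices to prove $F(a+b)\le F(a)+F(b)$: picking $\chi_1,\chi_2$ with $\flux(\chi_1)=a$, $\flux(\chi_2)=b$ that nearly realize the two infima, the triangle inequality for $d_{(\mu,c)}$ (Proposition~\ref{lemm:prop-of-d-and-ds}) together with the left-invariance built into its definition give $d_{(\mu,c)}(\id,\chi_1\chi_2)\le d_{(\mu,c)}(\id,\chi_1)+d_{(\mu,c)}(\id,\chi_2)$, while $\flux(\chi_1\chi_2)=a+b$; sharpening the approximations yields $F(a+b)\le F(a)+F(b)$, i.e. $\Delta_{(\mu,c)}(\psi\psi')\le\Delta_{(\mu,c)}(\psi)+\Delta_{(\mu,c)}(\psi')$.

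The remaining and essential point is non-degeneracy. The easy direction, $F(0)=0$, holds since $\id\in\ham{\momeg}$; the content is $F(a)>0$ for $a\neq0$ (equivalently, $\delta_{(\mu,c)}$ vanishes only at $[\id]$), and here I would re-use the flux estimate underlying Proposition~\ref{prop:non-degeneracy}. For a path $\Phi$ from $\id$ to $\chi$ the forms $\mu(\alpha(\Phi)_t)$ are exact, so $[\alpha(\Phi)_t]=[\alpha(\Phi)_t-\mu(\alpha(\Phi)_t)]$ and hence $\flux(\Phi)=\big[\int_0^1(\alpha(\Phi)_t-\mu(\alpha(\Phi)_t))\,dt\big]$. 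Estimating a de Rham class by the $L^2$--norm of a representative (Hodge theory and the equivalence of norms on the finite-dimensional $H^1_{\mathrm{dR}}(M)$) and discarding the non-negative term $n^B$ produce a constant $C$ with $|\flux(\Phi)|\le\frac Cc\,\ell_{(\mu,c)}(\Phi)$, where the hypothesis $c>0$ is used decisively. Since $\flux(\Phi)$ represents $\flux(\chi)=\flux(\psi)$ modulo $\Gamma$, infimizing over $\Phi$ and over admissible $\chi$ gives $\mathrm{dist}_{H^1_{\mathrm{dR}}(M)/\Gamma}(\flux(\psi),0)\le\frac Cc\,\Delta_{(\mu,c)}(\psi)$. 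Thus $\Delta_{(\mu,c)}(\psi)=0$ forces $\flux(\psi)$ into the closure of $\Gamma$, and the discreteness (hence closedness) of the flux group then gives $\flux(\psi)\in\Gamma$, i.e. $\psi\in\ham{\momeg}$. The main obstacle is exactly this closure issue: the estimate only separates flux classes lying outside $\overline{\Gamma}$, so non-degeneracy genuinely rests on the flux group being closed, precisely the input that makes Banyaga's argument, and therefore Proposition~\ref{prop:non-degeneracy}, work.
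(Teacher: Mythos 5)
Your proof is correct, but it takes a genuinely different route from the paper's. The paper argues directly at the group level: non-degeneracy is obtained by rerunning the loop-comparison argument of Proposition~\ref{prop:non-degeneracy} (two short paths with matching endpoints give a short loop whose flux lies in $\Gamma_M$ and must vanish by Ono's discreteness), the bi-invariance comes from left-invariance of $d_{(\mu,c)}$ together with normality of $\ham{\momeg}$, the identity $\Delta_{(\mu,c)}(\psi\psi')=\Delta_{(\mu,c)}(\psi'\psi)$ is deduced from Banyaga's theorem that $\ham{\momeg}=[\sympo{\momeg},\sympo{\momeg}]$, and the subadditivity from a two-step passage to infima in the triangle inequality. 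You instead factor $\Delta_{(\mu,c)}=F\circ\flux$ through the exact sequence $0\to\ham{\momeg}\to\sympo{\momeg}\to H^1_{\mathrm{dR}}(M)/\Gamma_M\to 0$, which turns every algebraic assertion into an elementary statement about a subadditive function on an abelian group; in particular you bypass the commutator-subgroup theorem entirely. For non-degeneracy you use the same essential inputs as the paper (exactness of $\mu(\alpha)$, the bound of the $L^2$--norm of $\int_0^1(\alpha(\Phi)_t-\mu(\alpha(\Phi)_t))\,dt$ by $\ell_{(\mu,c)}(\Phi)/c$, and Ono's discreteness), but you package them as the single quantitative inequality $\mathrm{dist}_{H^1_{\mathrm{dR}}(M)/\Gamma_M}(\flux(\psi),0)\le (C/c)\,\Delta_{(\mu,c)}(\psi)$, which is cleaner and in fact stronger than what the paper records: it gives a lower bound for $\Delta_{(\mu,c)}$ in terms of the distance from $\flux(\psi)$ to the flux group and thus feeds directly into statements of the type of Proposition~\ref{prop:flux-group-lattice}. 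Your closing remark is also accurate: closedness of $\Gamma_M$, guaranteed by discreteness, is exactly where Ono's theorem is indispensable, and the paper uses it at the same point.
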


\subsection*{Applications}

We use these $\Delta_{(\mu,c)}$'s to derive several easy consequences in terms of (non-) Hamiltonian symplectomorphisms (Corollary \ref{coro:hamilt-square-sympl}) and natural mappings of the symplectomorphism group (Corollary~\ref{coro:conjug-pres-shperes} and Proposition~\ref{prop:bounded-mult}). However the main applications of our machinery are expressed in terms of Flux groups, which we now turn to describe.

\subsubsection*{Lattices of $H^1_\mathrm{dR}(M)$}

The flux group of a compact symplectic manifold $(M,\omega)$ is denoted $\Gamma_M$ and defined as the image of $\pi_1(\sympo{\momeg})$ via the flux morphism:
\begin{align}\label{eq:def-flux-morph}
  \flux\co \widetilde{\mathrm{Symp}_0}\momeg \lra H^1_{\mathrm{dR}}(M) \;, \qquad [\Psi] \longmapsto \int_0^1 [\alpha(\Psi)_t]\, dt \;.
\end{align}
By Ono's theorem \cite{Ono06}, $\Gamma_M$ is known to be a discrete subgroup of $H^1_{\mathrm{dR}}(M)$ and a question we are interested in is: How far from 0 does the first non-trivial element of $\Gamma_M$ lie ? (And: What does ``how far'' mean ?) Even though we are not able to provide a satisfying answer to this question, our $\Delta$'s lead to obvious, non-unrelated observations as the following one.
\begin{prop}\label{prop:flux-group-lattice}
  If $\Delta(\sympo{\momeg})$ is unbounded (in $\bb R^+$), the flux group is not a lattice of $H^1_{\mathrm{dR}}(M)$, that is,
  \begin{align*}
    \quot{H^1_{\mathrm{dR}}(M)}{\Gamma_M} \simeq \bb R^k\times \bb T^{b_1(M)-k}  \qquad \mbox{ with }\; k\neq 0
 \end{align*}
($b_1(M)$ denotes the first Betty number of $M$ and $\bb T^m$ the $m$--dimensional torus).
\end{prop}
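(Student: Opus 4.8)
The plan is to prove the contrapositive: \emph{if $\Gamma_M$ is a lattice of $H^1_{\mathrm{dR}}(M)$, then $\Delta$ is bounded on $\sympo{\momeg}$.} Since $\Gamma_M$ is discrete by Ono's theorem \cite{Ono06}, the standard structure theorem for discrete subgroups of $\bb R^{b_1(M)}$ gives $\quot{H^1_{\mathrm{dR}}(M)}{\Gamma_M}\simeq \bb R^{b_1(M)-r}\times\bb T^{r}$, where $r=\mathrm{rank}\,\Gamma_M$; so $k=b_1(M)-r$ and ``$\Gamma_M$ is a lattice'' means exactly $k=0$. Proving boundedness of $\Delta$ under the full-rank hypothesis therefore yields the proposition.

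The reduction step is to recall the consequence of the flux conjecture established by Ono: the flux \eqref{eq:def-flux-morph} descends to a group isomorphism $\overline{\flux}\co \sympo{\momeg}/\ham{\momeg}\lra \quot{H^1_{\mathrm{dR}}(M)}{\Gamma_M}$ (the target being Hausdorff precisely because $\Gamma_M$ is discrete). Transporting the map $\delta$ of Proposition~\ref{coro:dist2Ham} through this isomorphism turns it into a function on $\quot{H^1_{\mathrm{dR}}(M)}{\Gamma_M}$, and since $\delta$ and $\Delta$ share the same image it suffices to bound $\delta$ uniformly.

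The heart of the argument is to reach every cohomology class by a short \emph{autonomous} path. Assuming $\Gamma_M$ has full rank, I would choose closed $1$--forms $\eta_1,\dots,\eta_{b_1(M)}$ whose classes form a $\bb Z$--basis of $\Gamma_M$, hence an $\bb R$--basis of $H^1_{\mathrm{dR}}(M)$. Any class admits a representative $a=\sum_i t_i[\eta_i]$ with $t_i\in[0,1)$; setting $\eta=\sum_i t_i\eta_i$, I let $\Psi=\{\psi_t\}$ be the flow of the symplectic vector field dual to $\eta$. This flow is autonomous, so $\alpha(\Psi)_t\equiv\eta$, whence $\flux(\Psi)=[\eta]=a$ and $\ell_{(\mu,c)}(\Psi)=n_{(\mu,c)}(\eta)$. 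As $n_{(\mu,c)}$ is a seminorm, $n_{(\mu,c)}(\eta)\leq\sum_i t_i\,n_{(\mu,c)}(\eta_i)\leq\sum_i n_{(\mu,c)}(\eta_i)=:C$, a constant independent of $a$. Taking $\varphi=\id$ in the definition of $\Delta$ and $\Psi$ as a competitor for $d_{(\mu,c)}(\id,\psi_1)$, one obtains $\Delta(\psi_1)\leq d_{(\mu,c)}(\id,\psi_1)\leq \ell_{(\mu,c)}(\Psi)\leq C$. Since $\overline{\flux}(\psi_1)=a$, this reads $\delta(a)\leq C$ for every class, so $\Delta(\sympo{\momeg})\subseteq[0,C]$ is bounded. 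By contraposition, unboundedness of $\Delta$ forces $\Gamma_M$ to fail to be a lattice, i.e.\ $k\neq0$.

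The only non-elementary ingredient is the identification $\sympo{\momeg}/\ham{\momeg}\simeq\quot{H^1_{\mathrm{dR}}(M)}{\Gamma_M}$, which is exactly where Ono's theorem enters; the remaining estimate is merely subadditivity and homogeneity of the seminorm $n_{(\mu,c)}$. I expect the point requiring the most care to be the uniformity of the bound: it is the finite-dimensionality of $H^1_{\mathrm{dR}}(M)$ that makes $C=\sum_i n_{(\mu,c)}(\eta_i)$ finite and independent of the class, thereby upgrading the elementary per-class length estimate into a global bound on $\Delta$.
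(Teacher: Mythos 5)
Your proof is correct, and it supplies in full the argument that the paper itself leaves implicit (the proposition is only described there as an ``obvious'' consequence of the properties of $\Delta$): full rank of $\Gamma_M$ gives a compact fundamental domain $\{\sum_i t_i[\eta_i],\ t_i\in[0,1)\}$, each point of which is reached by an autonomous symplectic flow of uniformly bounded $\ell_{(\mu,c)}$--length, and the descent of $\Delta$ to $\delta$ on $\sympo{\momeg}/\ham{\momeg}\simeq H^1_{\mathrm{dR}}(M)/\Gamma_M$ (via the exact sequence \eqref{eq:ses-flux} and the bi-invariance from Proposition~\ref{coro:dist2Ham}) then bounds $\Delta$ everywhere. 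This is exactly the intended route; no gaps.
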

Notice that $\Delta$ stands for $\Delta_{(\mu,c)}$ for any choice of data $(\mu,c)$, provided that $c\neq 0$.

\subsubsection*{Flux group of Cartesian products}

The question whether the flux group of a product of symplectic manifolds is isomorphic to the product of their flux groups has useful consequences. (See for example the \emph{Bounded isometry conjecture} by Lalonde and Polterovich \cite{LalondePolterovich97} for products of surfaces of positive genus and recent extensions by Campos-Apanco and Pedroza \cite{CamposPedroza10}.)

More precisely, let $(M,\omega)$ and $(M',\omega')$ be compact symplectic manifolds. It is easy to see that $\Gamma_M\times\Gamma_{M'}$ is ``included'' in $\Gamma_{M\times M'}$ and that they are isomorphic if and only if all split Hamiltonian diffeomorphisms of the product, $\phi\times\psi$ (with $\phi$ and $\psi$ symplectomorphisms of $(M,\omega)$ and $(M',\omega')$ respectively), are products of Hamiltonian diffeomorphisms. (See \S \ref{sec:appl-flux-groups} for more details and proofs.)

In other words, the obstruction for $\Gamma_{M\times M'}\simeq \Gamma_M\times\Gamma_{M'}$ is the existence of non-Hamiltonian symplectomorphisms of each component whose product is Hamiltonian. Our machinery gives information about those. 
\begin{theo}\label{theo:appli-flux-groups}
  Let $(M,\omega)$ and $(M',\omega')$ be closed, connected, symplectic manifolds. Let $\phi \in\sympo{(M,\omega)}\backslash \ham{(M,\omega)}$. There exists $\eps=\eps(\phi)>0$ such that
  \begin{align*}
    \phi\times \psi \in \ham{(M\times M',\omega\oplus\omega')} \quad \Longrightarrow \quad \Delta(\psi) \geq \eps \; .
  \end{align*}
In other words, given a non-Hamiltonian symplectomorphism of $(M,\omega)$, $\phi$, there exists a $\Delta$--neighborhood of $\ham{(M',\omega')}$, $\m U$, such that the map
\begin{align*}
  \sympo{(M',\omega')} \longrightarrow \sympo{(M\times M',\omega\oplus\omega')} \;, \qquad \psi \longmapsto \phi\times \psi
\end{align*}
restricted to $\m U$ takes its values in $\sympo{(M\times M',\omega\oplus\omega')} \backslash \ham{(M\times M',\omega\oplus\omega')}$.
\end{theo}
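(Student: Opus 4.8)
The plan is to prove the contrapositive via a continuity/stability argument for $\Delta$, reducing the problem to the key fact that $\Delta$ is strictly positive off $\ham{}$ and behaves well under products. Concretely, I would argue as follows. Fix the non-Hamiltonian $\phi\in\sympo{(M,\omega)}\backslash\ham{(M,\omega)}$. By Proposition~\ref{coro:dist2Ham} (the non-degeneracy of $\delta_{(\mu,c)}$), we know $\Delta(\phi)>0$; set this positive number aside. The goal is to produce $\eps>0$ depending only on $\phi$ so that $\Delta(\psi)<\eps$ forces $\phi\times\psi$ to be non-Hamiltonian, equivalently so that $\phi\times\psi\in\ham{(M\times M',\omega\oplus\omega')}$ forces $\Delta(\psi)\geq\eps$.

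The heart of the argument is a \emph{splitting estimate} for $\Delta$ on the product. I would first observe that the distance-to-$\ham{}$ on the product manifold controls the distance-to-$\ham{}$ on each factor. The natural mechanism is that the splitting norm $n_{(\mu,c)}$ on $Z^1(M\times M')$ should decompose (or at least dominate a constant multiple of) the sum of the corresponding norms on $Z^1(M)$ and $Z^1(M')$ under the K\"unneth-type identification of closed $1$--forms on a product with pairs of closed $1$--forms on the factors; this is where the $L^2$--part of the splitting norm is essential, since the $L^2$--norm on a product metric splits orthogonally. From such a comparison of seminorms I would pass to lengths of paths, then to the infima defining $d_{(\mu,c)}$, and finally to the infima over $\ham{}$ defining $\Delta$. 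The upshot I am aiming for is an inequality of the shape $\Delta_{M\times M'}(\phi\times\psi)\geq a\,\Delta_M(\phi)$ for some constant $a>0$ coming from the norm comparison; combined with an analogous bound involving $\psi$, or more directly with the fact that projecting a path in the product to the first factor cannot increase the relevant length, one gets that when $\phi\times\psi$ is \emph{Hamiltonian} (so its distance to $\ham{(M\times M')}$ is zero) the contribution of $\phi$ must be absorbed by that of $\psi$.

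The cleanest way to close the loop is to use the subadditivity and invariance properties of $\Delta$ recorded in Proposition~\ref{coro:dist2Ham}. If $\phi\times\psi\in\ham{}$, then its $\Delta$ vanishes, while projecting the defining paths to the two factors and applying the norm-splitting comparison yields $\Delta_M(\phi)\leq C\bigl(\Delta_{M\times M'}(\phi\times\psi)+\Delta_{M'}(\psi)\bigr)=C\,\Delta_{M'}(\psi)$ for a constant $C$ depending only on the factor metrics and on $c$. Rearranging gives $\Delta(\psi)\geq \Delta_M(\phi)/C$, so the claimed constant is $\eps=\eps(\phi):=\Delta_M(\phi)/C$, which is strictly positive precisely because $\phi\notin\ham{(M,\omega)}$ and $c\neq 0$. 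The final ``in other words'' sentence is then immediate: setting $\m U=\{\psi\mid \Delta(\psi)<\eps\}$, any $\psi\in\m U$ has $\phi\times\psi\notin\ham{}$, so the map $\psi\mapsto\phi\times\psi$ sends $\m U$ into the non-Hamiltonian symplectomorphisms of the product.

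I expect the main obstacle to be making the norm-splitting comparison rigorous. The delicate point is that $\mu$ is an \emph{arbitrary} homomorphism $Z^1\to B^1$, so on the product it need not respect the K\"unneth decomposition at all, and the $n^B$--part of the splitting norm carries no a priori product structure. The argument must therefore lean on the $L^2$--term, whose orthogonal splitting under the product metric is robust, to extract a lower bound independent of how $\mu$ mixes the factors; equivalently, one wants to show that the non-Hamiltonian ``defect'' of $\phi$ registers in the $L^2$--controlled part of any path realizing the product. Controlling the interaction between the $\mu$--dependent piece and the $L^2$--piece of $n_{(\mu,c)}$ on the product, uniformly enough to produce a genuine constant $C=C(\phi)$, is the step I would budget the most care for; the passage from seminorms to lengths to distances, and the invocation of Proposition~\ref{coro:dist2Ham}, are then formal.
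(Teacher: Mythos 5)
Your overall architecture (reduce to a comparison between $\Delta$ on the product and $\Delta$ on the factors, then conclude by subadditivity) is close in spirit to the paper's, but the key estimate you rely on points in the wrong direction, and it is exactly the step the paper's proof is built to avoid. The inequality that comes for free is $\Delta_{M\times M'}(\id_M\times\psi)\leq\Delta_{M'}(\psi)$: every split path $\id_M\times\Theta$ is a competitor for the infimum on the product, and with compatible choices of data ($\mu_{M\times M'}=\mu_M\oplus\mu_{M'}$ on $Z^1(M)\oplus Z^1(M')$, product metric, compatible norms on $B^1$ --- the paper simply \emph{makes} these choices rather than fighting an arbitrary $\mu$ on the product) one gets $\ell_\mu(\id_M\times\Theta)=\ell_{\mu_{M'}}(\Theta)$ exactly. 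What you need, however, is a reverse-type bound $\Delta_M(\phi)\leq C\,\Delta_{M\times M'}(\cdots)$ obtained by ``projecting the defining paths to the two factors''. That projection does not exist: the infimum defining $\Delta$ on the product runs over arbitrary paths in $\sympo{(M\times M',\omega\oplus\omega')}$, which need not preserve the factors, and the closed $1$--forms $\alpha(\Xi)_t$ they generate need not lie in $Z^1(M)\oplus Z^1(M')$ at all, so the $L^2$--orthogonality of that subspace gives no handle on them. This is a genuine gap, not merely a delicate point to be budgeted for. (A secondary issue: your chain would also need to compare $\Delta(\psi^{-1})$ with $\Delta(\psi)$, which is not automatic since $d_{(\mu,c)}$ is not symmetric; cf.\ Corollary \ref{coro:hamilt-square-sympl}.)

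The paper circumvents all of this with an algebraic pivot. Assuming $\phi\times\psi\in\ham{(M\times M',\omega\oplus\omega')}$, it writes $\phi^{-1}\times\id_{M'}=(\phi^{-1}\times\psi^{-1})\circ(\id_M\times\psi)$ and $\id_M\times\psi=(\phi\times\psi)\circ(\phi^{-1}\times\id_{M'})$; since the product-$\Delta$ vanishes on the Hamiltonian factors $(\phi\times\psi)^{\pm1}$ and satisfies the triangle-type inequality of Proposition \ref{coro:dist2Ham}, these two decompositions force $\Delta(\id_M\times\psi)=\Delta(\phi^{-1}\times\id_{M'})=:\eps(\phi)$, which is positive because $\phi^{-1}\times\id_{M'}$ is not Hamiltonian on the product (flux additivity plus $\phi\notin\ham{\momeg}$). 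Only then is the split-path comparison invoked, in the harmless direction, to conclude $\Delta_{M'}(\psi)\geq\Delta(\id_M\times\psi)=\eps(\phi)$. So the constant is a product-level quantity attached to $\phi^{-1}\times\id_{M'}$, not $\Delta_M(\phi)/C$, and no splitting estimate for non-split paths is ever needed. I would rework your argument along these lines.
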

(Here also, $\Delta$ stands for $\Delta_{(\mu,c)}$ for any choice of data $(\mu,c)$, provided that $c\neq 0$. Given $\phi$, the constant $\eps(\phi)$ itself depends on this choice, however it is non-zero for any choice.)

Curiously enough, the proof of this theorem boils down to the fact that if $\phi\times\psi$ is Hamiltonian, then necessarily, $\Delta(\id_M\times \psi)=\Delta(\phi^{-1}\times \id_{M'}) \, (=\eps(\phi))$.

\subsection*{More natural $\mu$'s (do not exist)}

We also discuss a special case of morphisms $\mu$ satisfying a quite technical condition which appears at several places and makes the theory much nicer (and to which we refer as Condition $(\dagger)$). 

Then we conjecture that no homomorphism $\mu$ can satisfy this condition (Conjecture~\ref{conj:nonexist-dagger-mu}) and we provide evidence towards this fact in terms of representation theory of the diffeomorphism group on closed 1--forms. (In particular, we prove, as a bi-product, that the pullback action of $\diffo$ on $Z^1(M)$ is irreducible, even though $Z^1(M)$ is not simple, see Corollary~\ref{theo:non-trivial-intersection}.)

\subsection*{Organization of the paper} 

The rest of the paper follows the order of the introduction. In Section~\ref{sec:general-case}, we deal with the general abstract construction: We precisely state and prove the properties of $d_{(\mu,c)}$ mentioned above. In Section \ref{sec:applications}, we state and prove some applications (quickly mentioned above). We also provide more details and we prove Theorem \ref{theo:appli-flux-groups}. In Section \ref{sec:ConditionDagger}, we gather our thoughts concerning Condition $(\dagger)$. Finally, in Section~\ref{sec:examples}, we present several examples of our construction, including the case studied by Banyaga.

\subsection*{Acknowledgments} 

This project started when both authors were at the Max Planck Institute of Leipzig. However, its geometry has drastically changed over the months and  it took its final shape during two stays of the second author at the University of Bonn. Both authors would like to acknowledge these extremely good working environments. The second author would also like to associate the Instituto Superior T\'ecnico of Lisbon to the previous comment. 

The authors also thank Augustin Banyaga, for disambiguation concerning concatenation of paths and more importantly for \cite{Banyaga07} which motivated this work. Finally, they thank the referee for valuable comments and for pointing out \cite{BuhovskyOstrover10} to them. 

The final publication is available at \href{http://www.springerlink.com/content/m47k168280460m32/}{www.springerlink.com}.

\section{The general abstract construction}\label{sec:general-case}

In what follows, $(M,\omega)$ is a closed symplectic manifold. We fix a Riemannian metric $g$ on $M$ and a norm $n^B$ on $B^1(M)$.

\subsection{Pseudo-distances on $\sympo{\momeg}$}

We recall that the starting point of our construction is the notion of \emph{splitting (semi)norms} on $Z^1(M)$:
   $$ n_{(\mu,c)}(\alpha) = n^B(\mu(\alpha)) + c \|\alpha - \mu(\alpha)\|_{L^2} $$
where $\mu\co Z^1(M) \rightarrow B^1(M)$ is a linear map and $c$ a non-negative real number.

Notice that \textit{any seminorm} on $Z^1(M)$ which induces $n^B$ on $B^1(M)$ is equivalent to such a seminorm since a complementary space of $B^1(M)$ has finite dimension.

\begin{lemm}\label{lemm:criterion-SSN-norms}
   $n_{(\mu,c)}$ is a norm on $Z^1(M)$ if and only if either $c \neq 0$ or $\mu$ is injective.
\end{lemm}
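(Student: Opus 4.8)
The plan is to separate the two seminorm properties—homogeneity together with the triangle inequality on one hand, and definiteness on the other—and to notice that only the latter can fail, so that the whole statement reduces to an identification of the kernel of $n_{(\mu,c)}$.

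First I would record that $n_{(\mu,c)}$ is a seminorm for \emph{every} admissible choice of $(\mu,c)$ with $c\geq 0$. Indeed, $\alpha\mapsto\mu(\alpha)$ and $\alpha\mapsto\alpha-\mu(\alpha)$ are both linear (the latter because $\mu$ is linear), so $\alpha\mapsto n^B(\mu(\alpha))$ and $\alpha\mapsto c\,\|\alpha-\mu(\alpha)\|_{L^2}$ are each obtained by precomposing a (semi)norm with a linear map, hence are seminorms; their sum $n_{(\mu,c)}$ is then a seminorm as well. Thus $n_{(\mu,c)}$ is a norm precisely when it is definite, i.e.\ when $n_{(\mu,c)}(\alpha)=0$ forces $\alpha=0$.

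The heart of the argument is then to describe the vanishing locus. Since $n^B$ and $\|-\|_{L^2}$ are genuine norms (the latter being definite on smooth $1$--forms on a closed manifold) and the two summands are non-negative, the condition $n_{(\mu,c)}(\alpha)=0$ is equivalent to $\mu(\alpha)=0$ \emph{and} $c\,\|\alpha-\mu(\alpha)\|_{L^2}=0$. I would then split according to $c$. If $c\neq 0$ the second condition reads $\alpha=\mu(\alpha)$, which combined with $\mu(\alpha)=0$ yields $\alpha=0$, so $n_{(\mu,c)}$ is a norm; note that here injectivity of $\mu$ plays no role. If $c=0$ the only surviving condition is $\mu(\alpha)=0$, so $n_{(\mu,0)}$ is definite exactly when $\ker\mu=0$, that is, when $\mu$ is injective. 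Conversely, if $c=0$ and $\mu$ is not injective, any nonzero $\alpha\in\ker\mu$ witnesses $n_{(\mu,0)}(\alpha)=0$, so $n_{(\mu,c)}$ fails to be a norm. Assembling these cases gives precisely the claimed equivalence ``$c\neq 0$ or $\mu$ injective''.

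There is no genuine obstacle in this lemma; the only point requiring a little care is to phrase the biconditional correctly, since $c\neq 0$ already guarantees the norm property \emph{independently} of $\mu$, whereas for $c=0$ the injectivity of $\mu$ is both necessary and sufficient. The computation rests solely on the definiteness of $n^B$ and of the $L^2$--norm, and uses nothing about the symplectic structure itself.
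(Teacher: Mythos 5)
Your proof is correct and follows essentially the same route as the paper: both reduce the statement to non-degeneracy (the seminorm axioms being automatic) and then analyse the vanishing of the two non-negative summands, splitting on whether $c$ is zero. The only cosmetic difference is that you spell out why the seminorm properties hold and note explicitly that injectivity of $\mu$ is irrelevant when $c\neq 0$, which the paper leaves implicit.
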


\begin{proof}
The only property we need to check is non-degeneracy. Clearly, when $c\neq 0$, if $n_{(\mu,c)}(\alpha)=0$, then $n^B(\mu(\alpha))$ and $\| \alpha - \mu(\alpha) \|_{L^2}$ vanish. Since $n^B$ is non-degenerate, we immediately get that $\| \alpha \|_{L^2}=0$ and thus $n_{(\mu,c)}$ is non-degenerate. \\
Now if $c=0$, $n_{(\mu,c)}(\alpha)=0$ is equivalent to $n^B(\mu(\alpha))=0$, that is $\mu(\alpha)=0$, and $n_{(\mu,c)}$ is non-degenerate if and only if $\mu$ is injective. 
\end{proof}

Let us remark several things about the definition:
\begin{enumerate}
	\item{$c$ acts as a switch: Its value is not of importance provided that $c\neq 0$.}
	\item{When $\mu=0$,  $n_{(\mu,c)}$ is the $L^2$--norm. On the other hand, choosing $n^B$ as the $L^2$--norm restricted to $B^1(M)$ leads to $L^2$--norms weighted by $\mu$.}
	\item{If $\mu$ is a projection onto a subspace $A\subset B^1(M)$ and $c=1$,  the restriction of $n_{(\mu,c)}$ to $B^1(M)$ is a genuine norm which \emph{interpolates} between the $L^2$--norm and the norm induced on $A$ by $n^B$. Moreover, it coincides with the $L^2$--norm for $A=\{0\}$ and with $n^B$ for $A=B^1(M)$.}
\item The latter case ($\mu$ projection on $B^1(M)$, even with $c=0$) is interesting since then $n_{(\mu,c)}$ is a genuine extension of $n^B$ to a (semi)norm on $Z^1(M)$.
\end{enumerate}

\begin{rema}\label{rema:injective-mu}
Since we are interested in extending distances on $\ham{\momeg}$ to (pseudo-)distances on $\sympo{\momeg}$, we will not pay much attention to injective $\mu$'s. Indeed, even though there exist such injective linear operators when the first de Rham cohomology group $H^1_{\mathrm{dR}}(M)$ does not vanish, they are particularly unnatural for they necessarily act non-trivially on an infinite dimensional subspace of $B^1(M)$.
\end{rema}

We recall from \eqref{eq:correspondences-paths}, that $ \Psi=\{\psi_t\}_{t\in [0,1]}\in \m P_{\id} \symp{(M,\omega)}$ uniquely corresponds to $ \alpha(\Psi)=\{\alpha(\Psi)_t\}_{t\in [0,1]}\in\m PZ^1(M)$, where $\alpha(\Psi)_t=\omega(\del_t \psi_t(\psi_t^{-1}),\cdot\,)$. 
We also recall from the introduction that any splitting seminorm $n_{(\mu,c)}$ induces a (pseudo-)length $\ell_{(\mu,c)}$ on $\m P_{\id} \symp{(M,\omega)}$:
$$ \ell_{(\mu,c)}(\Psi) =  \int_0^1 \Big( n^B(\mu(\alpha(\Psi)_t)) + c \| \alpha(\Psi)_t - \mu (\alpha (\Psi)_t)\|_{L^2} \Big) dt $$
which, in turn, leads to
\begin{align*}
   & d_{(\mu,c)}(\id, \phi) = \inf_{\Phi \mid \ev(\Phi) = \phi} \ell_{(\mu,c)} (\Phi)\;, \qquad d_{(\mu,c)} (\phi, \psi) = d_{(\mu,c)}(\id, \phi^{-1}\psi)  \\
   & \mbox{and }\qquad d^s_{(\mu,c)}(\id, \phi)= \frac 12 \left(d_{(\mu,c)}(\phi,\psi) + d_{(\mu,c)}(\psi,\phi)\right)\;.
\end{align*}

These quantities satisfy the following properties.
\begin{prop}%[Non-existence of genuine Finsler SSN metric]
\label{lemm:prop-of-d-and-ds}
  Let $\mu\co Z^1(M)\ra B^1(M)$ be any homomorphism and $c\geq 0$.
  \begin{enumerate}
  	\item\label{list:ps-dist} $d_{(\mu,c)}$ is positive and satisfies the triangle inequality such that $d_{(\mu,c)}^s$ is a pseudo-distance on $\sympo{\momeg}$.
  	\item\label{list:invariance} $d_{(\mu,c)}$ is left-invariant but not right-invariant in general.
  	\item\label{list:Finsler} If $c=0$ and $\mu$ satisfies $n^B(\mu(\alpha))=n^B(\mu(\phi^*\alpha))  $ for all $\alpha\in Z^1(M)$ and all $\phi\in\symp_0{\momeg}$, then $d_{(\mu,0)}$ is symmetric (and $d_{(\mu,0)}=d_{(\mu,0)}^s$).
 	\end{enumerate}
\end{prop}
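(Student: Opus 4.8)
The entire statement is governed by the way the generating $1$--form $\alpha(\Phi)$ transforms under the group operations, so the plan is to establish these transformation rules first. Writing $X^\Phi_t=\del_t\phi_t\circ\phi_t^{-1}$ for the generating vector field and using $\phi^*\omega=\omega$ together with the identity $\iota_{\varphi_*X}\omega=\varphi_*(\iota_X\omega)$, a direct computation yields three facts. Right translation $\{\phi_t\}\mapsto\{\phi_t\circ\varphi\}$ by a fixed $\varphi$ leaves the generating vector field, hence $\alpha(\Phi)_t$, unchanged; left translation $\{\phi_t\}\mapsto\{\varphi\circ\phi_t\}$ replaces $\alpha(\Phi)_t$ by $(\varphi^{-1})^*\alpha(\Phi)_t$; and inversion $\{\phi_t\}\mapsto\{\phi_t^{-1}\}$ replaces it by $-\phi_t^*\alpha(\Phi)_t$. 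These three rules are the only geometric input; everything else is formal.

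For item (1), positivity of $d_{(\mu,c)}$ is immediate since $n_{(\mu,c)}$ is a seminorm, whence $\ell_{(\mu,c)}\geq 0$, and the constant path shows $d_{(\mu,c)}(\id,\id)=0$. The triangle inequality reduces, via $d_{(\mu,c)}(\phi,\chi)=d_{(\mu,c)}(\id,\phi^{-1}\chi)$ and the factorization $\phi^{-1}\chi=(\phi^{-1}\psi)(\psi^{-1}\chi)$, to the submultiplicativity $d_{(\mu,c)}(\id,ab)\leq d_{(\mu,c)}(\id,a)+d_{(\mu,c)}(\id,b)$. To prove the latter I concatenate a path $B$ from $\id$ to $b$ with the right translate $\{a_t\circ b\}$ of a path $A$ from $\id$ to $a$; this is a path from $\id$ to $ab$ (piecewise smooth, after the usual reparametrization near the junction), and by the right-invariance of the generating $1$--form its length equals $\ell_{(\mu,c)}(A)+\ell_{(\mu,c)}(B)$. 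Taking infima gives the claim. The symmetrized $d^s_{(\mu,c)}$ is then symmetric by construction, vanishes on the diagonal, and inherits the triangle inequality as the average of two such inequalities, hence is a pseudo-distance.

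For item (2), left-invariance is tautological from the definition $d_{(\mu,c)}(\phi,\psi)=d_{(\mu,c)}(\id,\phi^{-1}\psi)$. For the failure of right-invariance I note that $d_{(\mu,c)}(\phi\eta,\psi\eta)=d_{(\mu,c)}(\id,\eta^{-1}(\phi^{-1}\psi)\eta)$, and that conjugation $\{\phi_t\}\mapsto\{\eta^{-1}\phi_t\eta\}$ transforms the generating $1$--form by $\alpha\mapsto\eta^*\alpha$ (combine the left- and right-translation rules). Thus right-invariance would force $n_{(\mu,c)}(\eta^*\alpha)=n_{(\mu,c)}(\alpha)$ for all $\eta$; taking $\mu=0$, so that $n_{(\mu,c)}=\|\cdot\|_{L^2}$, and an $\eta$ which is not a $g$--isometry exhibits the failure.

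Finally, for item (3), when $c=0$ the length is $\ell_{(\mu,0)}(\Phi)=\int_0^1 n^B(\mu(\alpha(\Phi)_t))\,dt$. By the inversion rule the path $\{\phi_t^{-1}\}$ from $\id$ to $\phi^{-1}$ has generating $1$--form $-\phi_t^*\alpha(\Phi)_t$, so its length is $\int_0^1 n^B(\mu(\phi_t^*\alpha(\Phi)_t))\,dt$, using that $n^B$ and $\mu$ are insensitive to sign change. The hypothesis $n^B(\mu(\phi_t^*\alpha))=n^B(\mu(\alpha))$, valid since each $\phi_t\in\sympo\momeg$, makes this equal to $\ell_{(\mu,0)}(\Phi)$; taking infima gives $d_{(\mu,0)}(\id,\phi^{-1})\leq d_{(\mu,0)}(\id,\phi)$, and the reverse inequality follows by applying this to $\phi^{-1}$. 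Hence $d_{(\mu,0)}(\id,a)=d_{(\mu,0)}(\id,a^{-1})$ for every $a$, which is precisely the symmetry $d_{(\mu,0)}(\phi,\psi)=d_{(\mu,0)}(\psi,\phi)$, so $d_{(\mu,0)}=d^s_{(\mu,0)}$. I expect the main obstacle to be the careful derivation of the transformation rules, especially the sign and pullback direction in the inversion rule, together with the verification that the concatenation can be reparametrized without changing its length; the remainder is bookkeeping. The restriction $c=0$ is essential in this last part, since the $L^2$--term is not $\phi_t^*$--invariant unless $\phi_t$ is a $g$--isometry.
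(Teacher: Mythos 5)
Your proposal is correct and follows essentially the same route as the paper: the triangle inequality via the "left" concatenation (a path to $b$ followed by the right-translate $\{a_t\circ b\}$, whose generating $1$--form is unchanged by right translation, so lengths add), and symmetry under $(\dagger)$ via the inversion rule $\alpha(\Phi^{-1})=-\Phi^*\alpha(\Phi)$. Your systematic derivation of the three transformation rules up front is a slightly more explicit packaging of the same computation the paper performs inline.
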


\begin{rema}
\label{rema:ConditionDagger}
  We will refer to the condition of assertion (\ref{list:Finsler}) as condition $(\dagger)$:
          \begin{align*}
            \forall\alpha\in Z^1(M), \, \forall\phi\in\symp_0{\momeg}, \quad n^B(\mu(\alpha))=n^B(\mu(\phi^*\alpha))  \;.
            \tag{$\dagger$}
          \end{align*}
Notice that it is equivalent to requiring that $n^B(\mu(\alpha))\leq n^B(\mu(\phi^*\alpha))$ for any $1$--form $\alpha$ and any symplectomorphism $\phi$. Since 
\begin{align*}
  	 \alpha(\Phi^{-1} \circ \Psi) = \alpha(\Phi^{-1}) + \Phi^*\alpha(\Psi) =-\Phi^* (\alpha(\Phi) - \alpha(\Psi))
\end{align*}
(where $\circ$ denotes time-wise composition), when $c=0$, condition $(\dagger)$ leads to the equality 
\begin{align*}
   \ell_{(\mu,c)}(\alpha(\Phi^{-1} \circ \Psi)) = \ell_{(\mu,c)}(\alpha(\Phi) - \alpha(\Psi))  
\end{align*}
which relates group structure on $\m P_\id\symp{\momeg}$ and linear structure on $\m PZ^1(M)$. This has useful consequences (see Section~\ref{sec:ConditionDagger} for a more detailed discussion). 
\end{rema}

\begin{proof}[Proof of Proposition \ref{lemm:prop-of-d-and-ds}]
 First, note that (\ref{list:invariance}) is obvious by definition. 

The proof of (\ref{list:Finsler}) is also straightforward: When $c=0$, condition $(\dagger)$ leads to the equality $\ell_{(\mu,c)}(\Phi^{-1}) =\ell_{(\mu,c)}(\Phi)$ (since $\alpha(\Phi^{-1})=-\Phi^*\alpha(\Phi)$) and $d_{(\mu,0)}$ is symmetric. 

Now, the triangle inequality (\ref{list:ps-dist}) satisfied by $d_{(\mu,c)}$ easily follows from an appropriate choice of ``product'' of paths (see \S~\ref{subsection:timewise-composition-and-concatenations} below for related remarks). Indeed, pick any three symplectomorphisms isotopic to $\id$, say $\varphi_1$, $\varphi_2$, and $\varphi_3$. We want to prove that
\begin{align}
  \label{eq:triangle-ineq-d}
  d_{(\mu,c)}(\varphi_1, \varphi_3) \leq d_{(\mu,c)}(\varphi_1, \varphi_2) + d_{(\mu,c)}(\varphi_2, \varphi_3) \; .  
\end{align}
Now, by definition,
$$ d_{(\mu,c)}(\varphi_1, \varphi_3) = d_{(\mu,c)}(\id,\varphi_1^{-1} \varphi_3) = \inf \ell_{(\mu,c)} (\Xi) $$
where $\Xi$ is any smooth symplectic isotopy from $\id$ to $\varphi_1^{-1}\varphi_3$. Put $\phi=\varphi_1^{-1}\varphi_2$ and $\psi=\varphi_2^{-1}\varphi_3$ and choose any paths $\Phi$ and $\Psi$ connecting $\id$ to $\phi$ and $\psi$ respectively.

Define $\Phi\ast_l \Psi$ as the ``left concatenation'' of the paths $\Phi$ and $\Psi$ as follows:
\begin{align}
  \label{eq:definition-concatenation-left}
  \Phi\ast_l\Psi\co \left\{
    \begin{array}[]{ll}
      \psi_{r(t)}, & 0\leq t\leq 1/2\\
      \phi_{s(t)}\psi, &1/2\leq t\leq 1
    \end{array}
  \right. 
\end{align}
where $r$ and $s$ are smooth, surjective, non-decreasing functions, defined on $[0,1/2]$ and $[1/2,1]$ respectively, with values in $[0,1]$ which are constant near the ends of their interval of definition. 

It is a smooth symplectic isotopy from $\id$ to $\phi\psi=(\varphi_1^{-1}\varphi_2)(\varphi_2^{-1}\varphi_3)=\varphi_1^{-1}\varphi_3$, and its corresponding $1$--form is easily seen to be 
\begin{align*}%\label{eq:1-forms-of-concatenation-left}
  \alpha(\Phi\ast_l\Psi) &\co \!\!\left\{\!\!
    \begin{array}[]{ll}
      r'(t)\,\alpha(\Psi)_{r(t)}, & 0\leq t\leq 1/2\\
      s'(t)\,\alpha(\Phi)_{s(t)}, &1/2\leq t\leq 1
    \end{array}
  \right. 
\end{align*}
Thus, we get, after a harmless change of variables (in time):
\begin{align}\label{eq:equality-length-concatenation}
  \ell_{(\mu,c)} (\Phi\ast_l\Psi) =   \ell_{(\mu,c)} (\Phi) + \ell_{(\mu,c)} (\Psi) 
\end{align}
which, in turn, leads to
$$ d_{(\mu,c)}(\varphi_1, \varphi_3) = \inf \ell_{(\mu,c)} (\Xi) \leq \ell_{(\mu,c)} (\Phi\ast_l\Psi) =  \ell_{(\mu,c)} (\Phi) + \ell_{(\mu,c)} (\Psi) \; .  $$
Since this holds for all paths $\Phi$ and $\Psi$ (with prescribed end points), we deduce that
 $$ d_{(\mu,c)}(\varphi_1, \varphi_3) \leq \inf  \ell_{(\mu,c)} (\Phi) + \inf \ell_{(\mu,c)} (\Psi) =  d_{(\mu,c)} (\id,\phi) + d_{(\mu,c)} (\id,\psi)  $$
by independently taking the infima. This, in turn, leads us to \eqref{eq:triangle-ineq-d}. Now the positivity is clear and $d_{(\mu,c)}^s$ is symmetric by definition. Proposition \ref{lemm:prop-of-d-and-ds} is proved.
\end{proof}

\subsubsection{Remarks on products of paths}\label{subsection:timewise-composition-and-concatenations}

It is well-known that in terms of fundamental groups, time-wise composition and concatenation of loops induce the same group law. However, on loops (and not homotopy classes) time-wise composition is a group law, but concatenation only a monoid law. 

Moreover, there is no canonical way to define the concatenation of paths starting at the identity and different choices lead to different results in terms of length. Let $\Phi$ and $\Psi$ be paths of symplectomorphisms starting at $\id$ and respectively ending at $\phi=\phi_1$ and $\psi=\psi_1$. Similarly to \eqref{eq:definition-concatenation-left} above, we define their ``right concatenation''
\begin{align*}%  \label{eq:definition-concatenation-right}
  \Phi \ast_r \Psi\co \left\{
    \begin{array}[]{ll}
      \phi_{r(t)}, & 0\leq t\leq 1/2\\
      \phi\psi_{s(t)}, &1/2\leq t\leq 1
    \end{array}
  \right.
\end{align*}
with $r$ and $s$ as in \eqref{eq:definition-concatenation-left}. This path connects $\id$ to $\phi\psi$ and corresponds to the 1--form:
\begin{align*}%     \label{eq:1-forms-of-concatenation}
  \alpha(\Phi \ast_r \Psi) &\co \!\!\left\{\!\!
    \begin{array}[]{ll}
      r'(t)\,\alpha(\Phi)_{r(t)}, & 0\leq t\leq 1/2\\
      s'(t)\,(\phi^{-1})^*\alpha(\Psi)_{s(t)}, &1/2\leq t\leq 1
    \end{array}
  \right.
\end{align*}

In view of \eqref{eq:equality-length-concatenation}, we see that $\ast_l$ is more relevant to us here since in general
\begin{align}\label{eq:equality-length-concatenation-right}
  \ell_{(\mu,c)} (\Phi\ast_r\Psi) = \ell_{(\mu,c)} (\Phi) + \ell_{(\mu,c)} (\phi_1\Psi) \neq   \ell_{(\mu,c)} (\Phi) + \ell_{(\mu,c)} (\Psi)   \;.
\end{align}
See \S \ref{sec:difference-concatenations-Banyagas-case} and in particular Lemma \ref{lemm:concatenation-not-minimiz} for some more precise remarks in the context of Banyaga's construction. 

Notice also that there is no constant $C>0$ such that $\ell_{(\mu,c)}(\Phi\ast\Psi) \leq C\cdot \ell_{(\mu,c)}(\Phi\circ\Psi)$ ($\ast$ standing for $\ast_r$ or $\ast_l$ and $\circ$ denoting time-wise composition) since $\ell_{(\mu,c)}(\Phi\circ\Phi^{-1})=0$. Moreover, under condition ($\dagger$), we have for $c=0$:
$$\ell_{(\mu,0)}(\Phi\circ\Psi) \leq \ell_{(\mu,0)}(\Phi\ast_l\Psi) = \ell_{(\mu,0)}(\Phi\ast_r\Psi) \;. $$

\subsection{Comparison with the initial norm}

Now we can also ask the question of the equivalence between $d_{(\mu,c)}$ restricted to $\ham{\momeg}$ and the distances induced by the initial norm. Let us denote by $d^B_{(\mu,c)}$ the distance on $\ham(M,\omega)$ induced by $\ell_{(\mu,c)}$ by taking the infimum over the set of paths included in $\ham{\momeg}$.

\begin{prop}%[Comparison with the initial norm]
\label{prop:comparison-with-nb}
In general, for all $\varphi\in \ham(M,\omega)$, 
\begin{align*}%\label{eq:canonicity of the pss}
\xymatrix@R=3pt@C=.5pt{\relax
    d_{(\mu,0)}(\id,\varphi)\;\; \ar@{}[rr]|*+<2pt>{\rotatebox{0}{$\leq$}}\ar@{}[rd]|*+<2pt>{\rotatebox{-37}{$\leq$}}  && d_{(\mu,c)}(\id,\varphi) \;\; \ar@{}[rd]|*+<2pt>{\rotatebox{-37}{$\leq$}} & \\
    & \;\;d_{(\mu,0)}^B(\id,\varphi) \ar@{}[rr]|*+<2pt>{\rotatebox{0}{$\leq$}} && \;\;d_{(\mu,c)}^B(\id,\varphi) \\
}
\end{align*}
\end{prop}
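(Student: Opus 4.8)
The plan is to observe that all four inequalities in the diagram follow from two completely elementary monotonicity principles, applied first at the level of the length functional $\ell_{(\mu,c)}$ and then passed to the infima defining the (pseudo-)distances. No analysis beyond the definitions is needed; the statement is essentially a bookkeeping exercise, and the only thing one must track carefully is which family of paths each infimum runs over.

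The two horizontal inequalities express monotonicity in the parameter $c$. First I would fix an arbitrary path $\Phi$ and read off directly from the definition of $\ell_{(\mu,c)}$ that, since $c\geq 0$ and the integrand $\|\alpha(\Phi)_t-\mu(\alpha(\Phi)_t)\|_{L^2}$ is pointwise non-negative,
\begin{align*}
   \ell_{(\mu,0)}(\Phi)=\int_0^1 n^B(\mu(\alpha(\Phi)_t))\,dt \;\leq\; \ell_{(\mu,0)}(\Phi)+c\int_0^1\|\alpha(\Phi)_t-\mu(\alpha(\Phi)_t)\|_{L^2}\,dt=\ell_{(\mu,c)}(\Phi)\;.
\end{align*}
Taking the infimum of both sides over \emph{one and the same} family of paths preserves the inequality. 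Running this over all symplectic isotopies from $\id$ to $\varphi$ gives the top arrow $d_{(\mu,0)}(\id,\varphi)\leq d_{(\mu,c)}(\id,\varphi)$, and running it over the isotopies contained in $\ham{\momeg}$ gives the bottom arrow $d^B_{(\mu,0)}(\id,\varphi)\leq d^B_{(\mu,c)}(\id,\varphi)$.

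The two diagonal inequalities express monotonicity under shrinking the domain of the infimum. For $\varphi\in\ham{\momeg}$ both competing families are non-empty, and every Hamiltonian isotopy from $\id$ to $\varphi$ is in particular a symplectic isotopy with the same endpoint; hence the Hamiltonian paths form a sub-family of all symplectic paths. Since the infimum of a fixed functional over a smaller set is at least its infimum over the larger set, I obtain, for each value of $c$ (in particular for $c=0$ and for the given $c$),
\begin{align*}
   d_{(\mu,c)}(\id,\varphi)=\inf_{\ev(\Phi)=\varphi}\ell_{(\mu,c)}(\Phi)\;\leq\;\inf_{\substack{\ev(\Phi)=\varphi\\ \Phi\subset\ham{\momeg}}}\ell_{(\mu,c)}(\Phi)=d^B_{(\mu,c)}(\id,\varphi)\;,
\end{align*}
which are exactly the right and left diagonal arrows. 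Assembling these four inequalities produces the claimed diagram. There is no genuine obstacle in the argument; the whole content is the interplay between the two infima, and the only points requiring care are that for $\varphi\in\ham{\momeg}$ the Hamiltonian path family is non-empty (so that $d^B_{(\mu,c)}$ is well-defined and finite) and that the functional $\ell_{(\mu,c)}$ being minimized is literally identical on the two path families. The proposition is thus of a preliminary nature, furnishing the easy half of the comparison that Theorem~\ref{theo:equiv-bany-hofers} later sharpens into an equivalence in favorable cases.
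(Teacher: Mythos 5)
Your proof is correct and follows exactly the paper's own argument: the horizontal inequalities come from the pointwise monotonicity of $\ell_{(\mu,c)}$ in $c$, and the diagonal ones from the fact that $d^B_{(\mu,c)}$ is an infimum over the smaller family of Hamiltonian paths. Nothing further is needed.
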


  Even though we cannot prove that $d_{(\mu,c)}$ and $d_{(\mu,c)}^B$ are equivalent on $\ham{\momeg}$ in this generality, we can do it in certain cases and in particular in the case studied by Banyaga in \cite{Banyaga07} (where he raised the question). See \S\ref{sec:equiv-bany-hofers-dist} for the precise result.

  \begin{proof}
This proposition is straightforward. The inequalities on the lines come from the fact that the length function $\ell_{(\mu,c)}$ is always greater or equal than $\ell_{(\mu,0)}$. The comparisons between the two lines come from the fact that we take the infima on a subspace when we consider the distances $d_{(\mu,c)}^B$ (for all $c$'s).    
  \end{proof}

\subsection{Distances to $\ham{\momeg}$}

We now precise the possible degeneracy of $d_{(\mu,c)}$.
\begin{prop}
\label{prop:non-degeneracy} 
Let us consider non-injective $\mu$'s. If $c= 0$, $d_{(\mu,0)}$ is degenerate. For $c\neq 0$, the degeneracy of $d_{(\mu,c)}$ is Hamiltonian, that is 
$$d_{(\mu,c)}(\id,\varphi)=0 \;\Longrightarrow\; \varphi\in\ham{\momeg} \; .$$
Equivalently, $d_{(\mu,c)}(\phi,\psi)=0$ forces $\psi = \phi\circ\varphi$ for some $\varphi\in\ham{\momeg}$.
\end{prop}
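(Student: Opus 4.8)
The plan is to handle the two cases separately. For $c=0$ with $\mu$ non-injective, degeneracy is essentially immediate: pick $0\neq\alpha_0\in Z^1(M)$ with $\mu(\alpha_0)=0$, realize it as the generating form of a genuine symplectic path $\Phi$ (so that $\alpha(\Phi)_t$ stays in $\ker\mu$), and observe $\ell_{(\mu,0)}(\Phi)=\int_0^1 n^B(\mu(\alpha(\Phi)_t))\,dt=0$. Then $d_{(\mu,0)}(\id,\varphi_1)=0$ for the (typically non-Hamiltonian) endpoint $\varphi_1$, so $d_{(\mu,0)}$ is degenerate. The only care needed is to produce a closed form in $\ker\mu$ with nonzero cohomology class (or at least a nonconstant path), which exists because $\mu$ non-injective means $\ker\mu\neq 0$; one then integrates the corresponding symplectic vector field.

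The substantive case is $c\neq 0$. Here I would argue by contraposition on the flux: I claim $d_{(\mu,c)}(\id,\varphi)=0$ forces $\flux$ of $\varphi$ (more precisely of the relevant lift) to vanish, which by the correspondence $\m P_\id\ham{\momeg}\leftrightarrow\m PB^1(M)$ recorded in \eqref{eq:correspondences-paths} is exactly the condition $\varphi\in\ham{\momeg}$. The key quantitative input is that, because $c\neq 0$, the length $\ell_{(\mu,c)}$ dominates the $L^2$--length of the generating path of forms:
\begin{align*}
  \ell_{(\mu,c)}(\Phi)\;\geq\; c\int_0^1 \|\alpha(\Phi)_t-\mu(\alpha(\Phi)_t)\|_{L^2}\,dt\;.
\end{align*}
If $d_{(\mu,c)}(\id,\varphi)=0$, choose a sequence of paths $\Phi^{(k)}$ from $\id$ to $\varphi$ with $\ell_{(\mu,c)}(\Phi^{(k)})\to 0$. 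Then both $\int_0^1 n^B(\mu(\alpha(\Phi^{(k)})_t))\,dt\to 0$ and $\int_0^1\|\alpha(\Phi^{(k)})_t-\mu(\alpha(\Phi^{(k)})_t)\|_{L^2}\,dt\to 0$. I would feed these into the flux: since flux of the lift of $\varphi$ determined by $\Phi^{(k)}$ is $\int_0^1[\alpha(\Phi^{(k)})_t]\,dt$, and the de Rham class is controlled by the harmonic/$L^2$ part of the form, the second bound pushes the harmonic component of the accumulated flux to $0$ while the first kills the part seen by $n^B$; projecting onto $H^1_{\mathrm{dR}}(M)$ (a finite-dimensional space where $L^2$ on harmonic representatives is a genuine norm) shows the flux class of each $\Phi^{(k)}$ tends to $0$ in $H^1_{\mathrm{dR}}(M)$.

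The main obstacle is the last step: passing from ``flux of the chosen lifts tends to $0$'' to ``$\varphi$ is Hamiltonian''. The flux of $\varphi$ itself is only defined modulo the flux group $\Gamma_M$, and different paths $\Phi^{(k)}$ may represent different lifts differing by elements of $\Gamma_M$. The way I expect to resolve this is precisely Ono's discreteness theorem \cite{Ono06}: $\Gamma_M$ is a discrete subgroup of $H^1_{\mathrm{dR}}(M)$, so a sequence of flux classes that converges to $0$ and lies in a fixed coset $\flux(\varphi)+\Gamma_M$ must eventually equal the unique nearest lattice translate; combined with the convergence to $0$ this forces that translate — hence $\flux(\varphi)\in\Gamma_M$, i.e.\ $\varphi$ lifts to something with trivial flux and is therefore Hamiltonian. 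This is exactly the Banyaga-style argument alluded to in the introduction, and I expect the bookkeeping of lifts versus the quotient by $\Gamma_M$ to be the only genuinely delicate point; the inequality-chasing that produces the vanishing limit is routine once the $c\neq 0$ lower bound above is in hand. The final ``equivalently'' clause then follows from left-invariance (Proposition~\ref{lemm:prop-of-d-and-ds}\eqref{list:invariance}): $d_{(\mu,c)}(\phi,\psi)=d_{(\mu,c)}(\id,\phi^{-1}\psi)=0$ gives $\phi^{-1}\psi\in\ham{\momeg}$, whence $\psi=\phi\circ\varphi$ with $\varphi\in\ham{\momeg}$.
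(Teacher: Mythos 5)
Your proposal is correct and follows essentially the same route as the paper: the $c\neq 0$ term bounds the $L^2$--norm of a representative of the flux of each connecting path, and Ono's discreteness of $\Gamma_M$ (phrased in the paper via the loop $\Psi^\eps\#\bar\Psi^{\eps'}$, in your version via discreteness of the coset $\flux(\varphi)+\Gamma_M$) forces the flux to vanish, hence $\varphi\in\ham{\momeg}$. The only cosmetic remark is that the bound on $\int_0^1 n^B(\mu(\alpha(\Phi^{(k)})_t))\,dt$ plays no role in the flux argument, since $\mu(\alpha)$ is exact and contributes nothing to the cohomology class.
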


As a corollary of (the proof of) this proposition, we will see that the ``distances to $\ham{\momeg}$'', $\Delta_{(\mu,c)}$, defined in Proposition \ref{coro:dist2Ham}, are non-degenerate. (We prove Proposition \ref{coro:dist2Ham} after the proof of the proposition). 

These two proofs heavily rely on ideas used by Banyaga in \cite{Banyaga07}: In particular, the proof of the proposition above is a generalization of a part of Banyaga's proof of non-degeneracy (namely, his proof of \eqref{eq:B-thm}, see \S\ref{sec:equiv-bany-hofers-dist} below).

\begin{proof}[Proof of Proposition \ref{prop:non-degeneracy}]
If $c=0$ and $\mu$ is not injective, $d_{(\mu,c)}$ is degenerate since there exists a non-trivial path of symplectomorphisms with length 0. 

Now, assume that $c\neq 0$. We want to prove that if $d_{(\mu,c)}(\id,\psi)=0$, then $\psi\in \ham{(M,\omega)}$. In order to do that, we adapt an idea due to Banyaga in \cite{Banyaga07} and we prove (the stronger fact) that any path of symplectomorphisms from $\id$ to such a $\psi$ with small enough length has flux 0).

Recall from \eqref{eq:def-flux-morph} that the flux of a path $\Psi\in\m P_\id\symp{\momeg}$ is defined as
\begin{align*}
  \flux(\Psi) = \int_0^1 [\alpha(\Psi)_t]\, dt \quad\in \, H^1_\mathrm{dR}(M,\bb R)
\end{align*}
and only depends on the homotopy class (with fixed ends) of $\Psi$. Since $\mu$ takes its values in $B^1(M)$, the 1--forms $\alpha(\Psi)_t$ and $\alpha(\Psi)_t - \mu(\alpha(\Psi)_t)$ represent the same cohomology class (for every $t$). 

Let $\psi$ be a symplectomorphism such that $d_{(\mu,c)}(\id,\psi)=0$. For all $\eps>0$, there exists $\Psi^\eps$ a path of symplectomorphisms running from $\id$ to $\psi$ with length $\ell_{(\mu,c)}(\Psi) < c\cdot\eps$. In particular, 
\begin{align*}
  \left\| \int_0^1 \alpha(\Psi^\eps)_t - \mu(\alpha(\Psi^\eps)_t) \,dt \right\|_{L^2}  \leq \int_0^1 \| \alpha(\Psi^\eps)_t - \mu(\alpha(\Psi^\eps)_t) \|_{L^2} \,dt \leq \frac{1}{c} \, \ell_{(\mu,c)}(\Psi) < \eps
\end{align*}
that is, $\flux(\Psi^\eps)$ admits a representative, $\beta_{\eps}=\int_0^1 \alpha(\Psi^\eps)_t - \mu(\alpha(\Psi^\eps)_t) \,dt$, whose $L^2$--norm is bounded above by $\eps$. 

Pick any two $\eps$ and $\eps'$ and consider $\gamma_{\eps,\eps'}=\Psi^\eps\#\bar\Psi^{\eps'}$ (namely, a smooth reparametrization of the loop of symplectomorphisms obtained by first going from the identity to $\psi$ along $\Psi^\eps$ and coming back along $\Psi^{\eps'}$ with reversed orientation). For all $\eps$ and $\eps'$, $\flux(\gamma_{\eps,\eps'})$ lies in the flux group, $\Gamma_M=\flux(\pi_1(\sympo{\momeg}))$, and admits a representative, $\beta_{\eps}-\beta_{\eps'}$, with $L^2$--norm smaller than $\eps+\eps'$. 

Since $\Gamma_M$ is discrete by Ono's Theorem \cite{Ono06}, there are finitely many harmonic $1$--forms with $L^2$--norm smaller than $1$ (for example) and whose cohomology class lies in $\Gamma_M\backslash \{0\}$. Denoting by $\eps_0>0$ the minimum of the $L^2$--norm of these (finitely many) $1$--forms, we can deduce that as soon as $\eps+\eps'<\eps_0$, $\flux(\gamma_{\eps,\eps'})=0$ (because the $L^2$--norm of the harmonic representative of $\flux(\gamma_{\eps,\eps'})$ is bounded above by the $L^2$--norm of any other representative of the same class and is thus smaller than $\eps_0$).

Since the flux is a morphism, $\flux(\Psi^\eps)= \flux(\Psi^{\eps'})$ for $\eps+\eps'<\eps_0$. Hence, $\int_0^1 \alpha(\Psi^{\eps'})_t - \mu(\alpha(\Psi^{\eps'})_t) \,dt$ provide representatives of $\flux(\Psi^\eps)$ of arbitrarily small $L^2$--norm, that is, the $L^2$--norm of the harmonic representative of $\flux(\Psi^\eps)$ is arbitrarily small and thus has to be 0. This amounts to $\flux(\Psi^\eps)=0$ for any $\eps<\eps_0$. This in particular implies that $\psi\in\ham{(M,\omega)}$.
\end{proof}

Now we can proceed with the proof of Proposition \ref{coro:dist2Ham}, which is partly (the non-degeneracy of $\Delta_{(\mu,c)}$) a corollary of the proof above.
\begin{proof}[Proof of Proposition \ref{coro:dist2Ham}]
  Assume that $\Delta_{(\mu,c)}(\psi)=0$. By definition, this means that for every $\eps>0$, there exists a Hamiltonian diffeomorphism $\varphi_\eps$ such that $d_{(\mu,c)}(\varphi_\eps,\psi)<\eps$. Thus, we get for all $\eps'$ the existence of a path of symplectomorphisms $\Theta_{\eps,\eps'}$ connecting $\id$ to $\varphi_\eps$ with length $\ell_{(\mu,c)}(\Theta_{\eps,\eps'})<\eps+\eps'$.

This in particular implies that $\flux(\Theta_{\eps,\eps'})$ admits a representative, $\int \alpha(\Theta_{\eps,\eps'})-\mu(\alpha(\Theta_{\eps,\eps'}))$, with $L^2$--norm smaller than $\eps+\eps'$. Then, as in the proof above, by choosing $\eps$ and $\eps'$ small enough, we conclude that $\flux(\Theta_{\eps,\eps'})=0$ and thus that $\varphi_\eps^{-1}\psi\in \ham{\momeg}$ and thus so is $\psi$.

Concerning the bi-invariance, the left invariance of $\Delta_{(\mu,c)}$ comes from the left-invariance of $d_{(\mu,c)}$ (even over $\sympo{\momeg}$): $d_{(\mu,c)}(\phi\varphi,\phi\psi)=d_{(\mu,c)}(\varphi,\psi)$ for any three symplectomorphisms $\phi$, $\varphi$ and $\psi$. So, restricting to the case when $\phi$ and $\varphi$ are Hamiltonian diffeomorphisms, we have
\begin{align*}
  \Delta_{(\mu,c)} (\psi) &= \inf \{ d_{(\mu,c)}(\varphi,\psi)  \,|\;\varphi\in\ham{\momeg} \} \\ &= \inf \{ d_{(\mu,c)}(\phi\varphi,\phi\psi)  \,|\; \phi\varphi\in\ham{\momeg} \} =   \Delta_{(\mu,c)} (\phi\psi)
\end{align*}
for any symplectomorphism $\psi$ and Hamiltonian diffeomorphism $\phi$. 

Now, since $\ham{\momeg}$ is a normal subgroup of $\sympo{\momeg}$ (and with the same notation) $\phi'=\psi\phi\psi^{-1}$ is Hamiltonian and we conclude, by left invariance, that:
\begin{align*}
  \Delta_{(\mu,c)} (\psi\phi) =  \Delta_{(\mu,c)} (\phi'\psi) =  \Delta_{(\mu,c)} (\psi) \;.
\end{align*}
Thus $\Delta$ is bi-invariant and we can define for any $[\psi] \in \sympo{\momeg}/\ham{\momeg}$, $\delta_{(\mu,c)}([\psi])$ as $\Delta_{(\mu,c)}(\psi')$ for any representative $\psi'$ of $[\psi]$.

The $\ham{\momeg}$--invariance above immediately implies that $\Delta(\psi\psi')=\Delta(\psi'\psi)$ for any two $\psi$ and $\psi'\in\sympo{\momeg}$ since $\ham{\momeg}=[\sympo{\momeg},\sympo{\momeg}]$, the commutator subgroup of $\sympo{\momeg}$.

Finally, for the triangle-type inequality, fix $\psi$, $\psi'\in\sympo{\momeg}$ and $\phi\in\ham{\momeg}$. Notice that for any $\phi'\in\ham{\momeg}$, by triangle inequality on $d_{(\mu,c)}$, we have
\begin{align*}
  d_{(\mu,c)}(\phi,\psi\psi') \leq d_{(\mu,c)}(\phi,\psi\phi') + d_{(\mu,c)}(\psi\phi',\psi\psi') \; .
\end{align*}
By left-invariance $d_{(\mu,c)}(\psi\phi',\psi\psi')=d_{(\mu,c)}(\phi',\psi')$. Since this term does not depend on $\phi$, by taking the infimum over all Hamiltonian diffeomorphisms, we get 
\begin{align*}
\underbrace{\inf_\phi d_{(\mu,c)}(\phi,\psi\psi')} &\leq   \underbrace{\inf_\phi d_{(\mu,c)}(\phi,\psi\phi')} +\; d_{(\mu,c)}(\phi',\psi')\\
\mbox{that is, }\qquad   \Delta_{(\mu,c)}(\psi\psi') \quad &\leq \quad\Delta_{(\mu,c)}(\psi\phi') \quad  + d_{(\mu,c)}(\phi',\psi')
\end{align*}
for all $\phi'\in\ham{\momeg}$. Now, by right-invariance of $\Delta$ with respect to composition with Hamiltonian diffeomorphisms, $\Delta_{(\mu,c)}(\psi\phi')=\Delta_{(\mu,c)}(\psi)$ and thus does not depend on $\phi'\in\ham{\momeg}$. Thus we deduce the desired inequality
\begin{align*}
\Delta_{(\mu,c)}(\psi\psi') &\leq \Delta_{(\mu,c)}(\psi) + \Delta_{(\mu,c)}(\psi')
 \end{align*}
by taking the infimum over all $\phi'$. 

Obviously $\delta_{(\mu,c)}$ also satisfies these relations on equivalence classes, since $\psi\psi'$ is a representative of $[\psi]\cdot[\psi']$ in $\sympo{\momeg}/\ham{\momeg}$.
\end{proof}

\section{Applications}\label{sec:applications}

In this section, we choose data $(\mu,c)$ with $c\neq 0$ and omit it from the notation. (The dependence on the specific choice of data is not essential.) All the results below can also be stated with $\Delta$ replaced by the quotient version $\delta$.

\subsection{Hamiltonian square of symplectomorphisms}

In view of its properties, it is not surprising that $\Delta$ provides obstructions for symplectomorphisms to be Hamiltonian. Here is a straightforward consequence of Proposition \ref{coro:dist2Ham}. 
\begin{coro}\label{coro:hamilt-square-sympl}
  Let $\psi\in\sympo{\momeg}$. If $\psi^2$ is Hamiltonian, $\Delta(\psi)=\Delta(\psi^{-1})$. 
\end{coro}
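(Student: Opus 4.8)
The plan is to derive Corollary~\ref{coro:hamilt-square-sympl} directly from the properties of $\Delta$ established in Proposition~\ref{coro:dist2Ham}, since $\Delta$ is constant on conjugacy classes (up to cyclic permutation) and descends through the commutator structure. The statement to prove is that if $\psi^2\in\ham{\momeg}$, then $\Delta(\psi)=\Delta(\psi^{-1})$.

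First I would recall the key algebraic properties of $\Delta$ that we are allowed to use: $\Delta$ is bi-invariant under composition with Hamiltonian diffeomorphisms, and more crucially $\Delta(\psi'\psi)=\Delta(\psi\psi')$ for all symplectomorphisms $\psi,\psi'$. The strategy is to apply this cyclic invariance cleverly. Writing $\varphi=\psi^2\in\ham{\momeg}$, I note that $\psi^{-1}=\psi\cdot\psi^{-2}=\psi\varphi^{-1}$, where $\varphi^{-1}$ is Hamiltonian. Then by the right-invariance of $\Delta$ under composition with Hamiltonian diffeomorphisms (which follows from bi-invariance in Proposition~\ref{coro:dist2Ham}), we get
\begin{align*}
  \Delta(\psi^{-1}) = \Delta(\psi\varphi^{-1}) = \Delta(\psi)\; ,
\end{align*}
since $\varphi^{-1}\in\ham{\momeg}$.

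The main point is therefore a short computation exploiting that $\psi^{-1}$ and $\psi$ differ only by the Hamiltonian factor $\psi^{-2}=\varphi^{-1}$, together with the invariance of $\Delta$ under such factors. I do not anticipate a genuine obstacle here; the only thing to be careful about is to invoke exactly the right invariance property (right-invariance under $\ham{\momeg}$) rather than the cyclic relation $\Delta(\psi'\psi)=\Delta(\psi\psi')$, which would not by itself suffice. Indeed, the cyclic relation alone relates $\Delta(\psi^{-1})=\Delta(\psi\psi^{-2})$ to $\Delta(\psi^{-2}\psi)=\Delta(\psi^{-1})$ tautologically, so the substantive input is that composing on the right (or left) with the Hamiltonian diffeomorphism $\psi^{-2}$ leaves $\Delta$ unchanged. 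This is precisely the $\ham{\momeg}$--invariance proved in Proposition~\ref{coro:dist2Ham}, so the corollary follows immediately.
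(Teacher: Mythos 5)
Your proof is correct and essentially identical to the paper's: the paper writes $\Delta(\psi)=\Delta(\psi^2\psi^{-1})=\Delta(\psi^{-1})$ using left-composition with the Hamiltonian $\psi^2$, while you write $\Delta(\psi^{-1})=\Delta(\psi\psi^{-2})=\Delta(\psi)$ using right-composition with $\psi^{-2}$; both rest on the same $\ham{\momeg}$--bi-invariance from Proposition~\ref{coro:dist2Ham}. Your side remark that the cyclic relation alone would be tautological here is accurate and shows you identified the correct invariance to invoke.
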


\begin{proof}
  For $\psi^2\in\ham{\momeg}$, $\Delta(\psi) = \Delta(\psi^2\psi^{-1}) = \Delta(\psi^{-1})$ by invariance of $\Delta$.
\end{proof}

\subsection{Natural mappings of $\sympo{\momeg}$}

The \emph{Bounded isometry conjecture} which we mentioned in the introduction, follows from the viewpoint adopted by Lalonde and Polterovich in \cite{LalondePolterovich97}, where symplectomorphisms are seen as isometries of the Hamiltonian diffeomorphism group with respect to Hofer's distance. 

Indeed, as mentioned above, $\ham{\momeg}$ is a normal subgroup of $\symp{\momeg}$, that is, $\psi\varphi\psi^{-1}$ is Hamiltonian as soon as $\varphi$ is and Hofer's distance is invariant on conjugacy classes, in the sense that: $d_\mathrm{Hofer}(\id,\varphi)=d_\mathrm{Hofer}(\id,\psi\varphi\psi^{-1})$.

So they defined the map
\begin{align*}
  C\co \symp{\momeg} \lra \mathrm{Isom}(\ham{\momeg},d_\mathrm{Hofer}) \;,\qquad \psi \longmapsto C_\psi
\end{align*}
with $C_\psi(\varphi)=\psi\varphi\psi^{-1}$. They declared such a $C_\psi$ to be \emph{bounded} if $d_\mathrm{Hofer}(\varphi,\psi\varphi\psi^{-1})$ were bounded from above (independently of $\varphi\in\ham{\momeg}$). They proved that $C_\psi$ is bounded (if and) only if $\psi$ is Hamiltonian for certain symplectic manifolds and they conjectured that this holds for general compact symplectic manifolds. 

Of course, $C_\psi$ can easily be extended by the same formula to a group endomorphism of the whole symplectomorphism group and we also denote by $C$ its restriction to the connected component of the identity:
\begin{align*}
  C\co \sympo{\momeg} \lra \mathrm{End}(\sympo{\momeg},\circ) \;,\qquad \psi \longmapsto C_\psi \;.
\end{align*}
We denote by $S_\Delta(r)$ the $\Delta$--sphere of radius $r$:
\begin{align*}
  S_\Delta(r) = \{ \psi\in\sympo{\momeg}\,|\; \Delta(\psi)=r  \} \;.
\end{align*}
Then we have the following obvious corollary of Proposition \ref{coro:dist2Ham}, which tells that the endomorphisms $C_\psi$ are ``$\Delta$--isometries''.
\begin{coro}\label{coro:conjug-pres-shperes}
For any $\psi\in\sympo{\momeg}$ and any radius $r$, $C_\psi(S_\Delta(r)) = S_\Delta(r)$.
\end{coro}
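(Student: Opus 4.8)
Looking at Corollary \ref{coro:conjug-pres-shperes}, I need to prove that for any $\psi\in\sympo{\momeg}$ and any radius $r$, the conjugation endomorphism $C_\psi$ maps the $\Delta$--sphere of radius $r$ onto itself.

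\textbf{The plan} is to unwind the definitions and reduce everything to the invariance properties of $\Delta$ already established in Proposition \ref{coro:dist2Ham}. Recall that $S_\Delta(r)=\{\varphi\in\sympo{\momeg}\,|\;\Delta(\varphi)=r\}$ and that $C_\psi(\varphi)=\psi\varphi\psi^{-1}$. The crucial point to exploit is the identity $\Delta(\phi\phi')=\Delta(\phi'\phi)$, valid for all $\phi,\phi'\in\sympo{\momeg}$, which was proved in Proposition \ref{coro:dist2Ham} as a consequence of the bi-invariance of $\Delta$ under $\ham{\momeg}$ together with the fact that $\ham{\momeg}=[\sympo{\momeg},\sympo{\momeg}]$.

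\textbf{The key computation} is then immediate: applying the commutation relation with $\phi=\psi\varphi$ and $\phi'=\psi^{-1}$ gives
\begin{align*}
  \Delta(C_\psi(\varphi)) = \Delta(\psi\varphi\psi^{-1}) = \Delta((\psi\varphi)\psi^{-1}) = \Delta(\psi^{-1}(\psi\varphi)) = \Delta(\varphi)\;.
\end{align*}
Thus $C_\psi$ preserves the value of $\Delta$, so $\varphi\in S_\Delta(r)$ implies $C_\psi(\varphi)\in S_\Delta(r)$, giving the inclusion $C_\psi(S_\Delta(r))\subseteq S_\Delta(r)$. For the reverse inclusion (and hence the asserted equality of sets, not merely containment), I would note that $C_\psi$ is a bijection of $\sympo{\momeg}$ with inverse $C_{\psi^{-1}}$; applying the inclusion just obtained to $C_{\psi^{-1}}$ shows that every element of $S_\Delta(r)$ is the image under $C_\psi$ of an element of $S_\Delta(r)$, which yields $S_\Delta(r)\subseteq C_\psi(S_\Delta(r))$.

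\textbf{I do not anticipate any real obstacle here}, since the statement is explicitly flagged as an ``obvious corollary'' of Proposition \ref{coro:dist2Ham}; the only thing requiring a moment's care is making sure one proves set equality rather than mere inclusion, which is why invoking the bijectivity of $C_\psi$ (or equivalently running the same argument for $C_{\psi^{-1}}$) is the one non-automatic step. Everything else is a direct consequence of the conjugation-invariance of $\Delta$ already available.
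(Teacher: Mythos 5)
Your proof is correct and follows exactly the same route as the paper: the paper's entire proof is the one-line observation that $\Delta(\psi\psi')=\Delta(\psi'\psi)$ for all elements of $\sympo{\momeg}$, which is precisely the commutation identity you invoke. Your additional remark that one should use the bijectivity of $C_\psi$ (via $C_{\psi^{-1}}$) to upgrade the inclusion to the stated set equality is a legitimate small refinement that the paper leaves implicit.
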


\begin{proof}
$\Delta(\psi\psi')=\Delta(\psi'\psi)$ for any two elements of $\sympo{\momeg}$.
\end{proof}

Note that $S_\Delta(0)=\ham{\momeg}$ and in that case, $C_\psi(S_\Delta(0)) = S_\Delta(0)$ only reflects the fact that $\ham{\momeg}$ is normal in $\sympo{\momeg}$.

Finally, let us note that the (left or right) multiplication is ``$\Delta$--bounded'', that is
\begin{prop}\label{prop:bounded-mult}
  Let $\psi\in\sympo{\momeg}$. For all $\phi\in\sympo{\momeg}$:
$$|\Delta(\psi\phi)-\Delta(\phi)|\leq R(\psi) \qquad \mbox{ with }\;  R(\psi)=\max\{\Delta(\psi),\Delta(\psi^{-1})\} \; .$$
\end{prop}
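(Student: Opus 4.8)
The plan is to prove the bound $|\Delta(\psi\phi)-\Delta(\phi)|\leq R(\psi)$ directly from the triangle-type inequality established for $\Delta$ in Proposition~\ref{coro:dist2Ham}, together with the relation $\Delta(\psi\psi')=\Delta(\psi'\psi)$ (also from that proposition). The key observation is that the inequality $\Delta(\psi\psi')\leq\Delta(\psi)+\Delta(\psi')$ is exactly a sub-additivity statement, so the desired estimate is of the same flavor as the reverse triangle inequality $|\,\|x+y\|-\|y\|\,|\leq\|x\|$ for a norm.

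Let me outline the steps. First I would apply the triangle inequality with the factorization $\psi\phi=\psi\cdot\phi$ to get
\begin{align*}
  \Delta(\psi\phi)\leq\Delta(\psi)+\Delta(\phi)\;,
\end{align*}
which immediately yields $\Delta(\psi\phi)-\Delta(\phi)\leq\Delta(\psi)\leq R(\psi)$. For the lower bound, I would write $\phi=\psi^{-1}(\psi\phi)$ and apply the triangle inequality again:
\begin{align*}
  \Delta(\phi)=\Delta\big(\psi^{-1}(\psi\phi)\big)\leq\Delta(\psi^{-1})+\Delta(\psi\phi)\;,
\end{align*}
which rearranges to $\Delta(\phi)-\Delta(\psi\phi)\leq\Delta(\psi^{-1})\leq R(\psi)$. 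Combining the two bounds gives
\begin{align*}
  -R(\psi)\leq\Delta(\psi\phi)-\Delta(\phi)\leq R(\psi)\;,
\end{align*}
which is precisely $|\Delta(\psi\phi)-\Delta(\phi)|\leq R(\psi)$. Since $R(\psi)=\max\{\Delta(\psi),\Delta(\psi^{-1})\}$, both $\Delta(\psi)$ and $\Delta(\psi^{-1})$ are bounded above by $R(\psi)$, so the substitution into the two chains of inequalities is valid.

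I do not anticipate any serious obstacle here: the statement is a formal consequence of the sub-additivity and the commutation property already proven, and both applications of the triangle inequality use only factorizations that are manifestly symplectomorphisms isotopic to the identity. The only point requiring a moment of care is the direction of each estimate — making sure to invoke $\Delta(\psi)$ for the upper bound and $\Delta(\psi^{-1})$ for the lower bound — so that the symmetric quantity $R(\psi)$ correctly dominates both. The proof is short and purely algebraic, relying on no further geometric input beyond Proposition~\ref{coro:dist2Ham}.
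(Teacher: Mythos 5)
Your proof is correct and is essentially the same as the paper's: both use the sub-additivity of $\Delta$ from Proposition~\ref{coro:dist2Ham} with the two factorizations $\psi\phi=\psi\cdot\phi$ and $\phi=\psi^{-1}\cdot(\psi\phi)$ to bound the difference from above by $\Delta(\psi)$ and from below by $-\Delta(\psi^{-1})$. The only cosmetic difference is that the paper phrases this as a case distinction on the sign of $\Delta(\psi\phi)-\Delta(\phi)$, while you combine the two one-sided estimates directly; the commutation relation $\Delta(\psi\psi')=\Delta(\psi'\psi)$ you mention is not actually needed.
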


\begin{proof}
  Assume $\Delta(\psi\phi)\geq\Delta(\phi)$, then
\begin{align*}
  |\Delta(\psi\phi)-\Delta(\phi)| = \Delta(\psi\phi)-\Delta(\phi) \leq \Delta(\psi) + \Delta(\phi)-\Delta(\phi) = \Delta(\psi) \;.
\end{align*}
Otherwise, $\Delta(\psi\phi)\leq\Delta(\phi)$ and then
\begin{align*}
  |\Delta(\psi\phi)-\Delta(\phi)| = \Delta(\phi)-\Delta(\psi\phi) &=  \Delta(\psi^{-1}(\psi\phi))-\Delta(\psi\phi) \\ 
  &\leq \Delta(\psi^{-1})+\Delta(\psi\phi)-\Delta(\psi\phi) =  \Delta(\psi^{-1}) \;.
\end{align*}
Either way, $|\Delta(\psi\phi)-\Delta(\phi)|$ is bounded above by $R(\psi)$.
\end{proof}

\subsection{Application to flux groups}\label{sec:appl-flux-groups}

First, we recall a few obvious observations and explain the facts mentioned in the introduction which motivate Theorem \ref{theo:appli-flux-groups}. Then we prove the theorem. 

Let $(M,\omega)$ and $(M',\omega')$ be compact symplectic manifolds. The Cartesian product $(M\times M',\omega\oplus \omega')$ is symplectic and there is an obvious group morphism
\begin{align*}
  \varpi\co \sympo{\momeg} \times \sympo{\momegp} \ra \sympo{(M\times M',\omega\oplus \omega')} \,, \quad (\phi,\psi) \mapsto \phi\times \psi
\end{align*}
which induces a map on fundamental groups
\begin{align*}
  \xymatrix@C=.3cm{\relax
 \pi_1(\sympo{\momeg}) \times \pi_1(\sympo{\momegp}) \ar@{}[r]|{\hspace{.3cm}\simeq}  & \pi_1( \sympo{\momeg} \times \sympo{\momegp})\ar[d] \\
& \pi_1(\sympo{(M\times M',\omega\oplus \omega')})
}
\end{align*}

The tangent space of the product splits, i.e $T_{(x,x')}(M\times M')\simeq T_xM\oplus T_{x'}M'$, and via this identification, for $\Xi=(\Phi,\Psi)$ a path of split symplectomorphisms of the product starting at $\id$, $X^\Xi_t(x,x') = X^\Phi_t(x)\oplus X^\Psi_t(x')$ and thanks to the choice of symplectic structure on the product, we get $ \alpha(\Xi)_t = \alpha(\Phi)_t \oplus \alpha(\Psi)_t$ in $Z^1(M)\oplus Z^1(M')\subset Z^1(M\times M')$. When $\Xi$ is a loop, that is, $\xi_1=\xi_0=\id$, this immediately leads to
$$\flux([\Xi]) = \flux([\Phi])\oplus \flux([\Psi])$$
which in turn gives
$$ \Gamma_M\times \Gamma_{M'} \simeq \Gamma_M\oplus \Gamma_{M'} \subset \Gamma_{M\times M'}$$
under the identification $ H^1_\mathrm{dR}(M\times M' ) \simeq H^1_\mathrm{dR}(M) \oplus  H^1_\mathrm{dR}(M')$ given by K\"unneth's formula.

Next recall that the flux group and the flux morphism are involved in a short exact sequence
\begin{align}\label{eq:ses-flux}
    \xymatrix{\relax
 0 \ar[r] &\ham{\momeg} \ar[r] &\sympo{\momeg} \ar[r]^\flux &H^1_\mathrm{dR}(M)/\Gamma_M \ar[r] &0
}
\end{align}
where the first (non-trivial) morphism is the inclusion and the second one, $\flux$, is induced by the flux morphism.

Now, it is easy to prove the following lemma (which appears in \cite{CamposPedroza10}).
\begin{lemm}
  With the same notation, $\Gamma_M\times \Gamma_{M'} \simeq \Gamma_{M\times M'}$ if and only if there are no non-Hamiltonian diffeomorphisms $\phi$ and $\psi$ (of resp. $\momeg$ and $\momegp$) whose product $\phi \times \psi$ is a Hamiltonian diffeomorphism of $(M\times M',\omega\oplus\omega')$.
\end{lemm}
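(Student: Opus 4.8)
The plan is to translate the statement entirely into flux-theoretic language, using the short exact sequence \eqref{eq:ses-flux} together with the path-level splitting $\flux([\Phi\times\Psi]) = \flux([\Phi])\oplus\flux([\Psi])$ established above. Throughout I use two elementary facts read off from \eqref{eq:ses-flux}: a symplectomorphism $\theta$ is Hamiltonian if and only if its flux vanishes in $H^1_\mathrm{dR}(M)/\Gamma_M$; and if $\Theta$ is any path from $\id$ to $\theta$, then this vanishing is equivalent to $\flux([\Theta])\in\Gamma_M$ at the level of honest cohomology classes. The same holds on $\momegp$ and on the product. Following the introduction, I read the condition of the lemma as: every split Hamiltonian $\phi\times\psi$ of the product has both factors Hamiltonian.

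For the forward implication I assume $\Gamma_{M\times M'} = \Gamma_M\oplus\Gamma_{M'}$ (the inclusion $\supseteq$ being automatic from the computation above) and take $\phi\times\psi\in\ham{(M\times M',\omega\oplus\omega')}$. Choosing paths $\Phi,\Psi$ from $\id$ to $\phi,\psi$, the split path $\Xi=\Phi\times\Psi$ runs to $\phi\times\psi$ and has $\flux([\Xi])=\flux([\Phi])\oplus\flux([\Psi])$. Since $\phi\times\psi$ is Hamiltonian this class lies in $\Gamma_{M\times M'}=\Gamma_M\oplus\Gamma_{M'}$, which under the K\"unneth splitting says exactly that $\flux([\Phi])\in\Gamma_M$ and $\flux([\Psi])\in\Gamma_{M'}$; hence $\phi,\psi$ are both Hamiltonian. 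Equivalently, the existence of a bad pair forbids the equality.

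For the converse I assume every Hamiltonian product has both factors Hamiltonian and pick an arbitrary $\gamma=(a,a')\in\Gamma_{M\times M'}\subseteq H^1_\mathrm{dR}(M)\oplus H^1_\mathrm{dR}(M')$. Since the flux homomorphism on the universal cover $\widetilde{\sympo{\momeg}}\to H^1_\mathrm{dR}(M)$ is surjective (the isotopy generated by the $\omega$-dual of a closed form $\beta$ has flux $t\,[\beta]$), I find paths $\Phi,\Psi$ to symplectomorphisms $\phi,\psi$ with $\flux([\Phi])=a$, $\flux([\Psi])=a'$. The split path $\Xi=\Phi\times\Psi$ then has $\flux([\Xi])=\gamma\in\Gamma_{M\times M'}$, so its endpoint $\phi\times\psi$ has vanishing flux, i.e. $\phi\times\psi\in\ham{(M\times M',\omega\oplus\omega')}$. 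By hypothesis $\phi$ and $\psi$ are Hamiltonian, whence $a\in\Gamma_M$ and $a'\in\Gamma_{M'}$, that is $\gamma\in\Gamma_M\oplus\Gamma_{M'}$. As $\gamma$ was arbitrary, $\Gamma_{M\times M'}\subseteq\Gamma_M\oplus\Gamma_{M'}$, and with the reverse inclusion the two subgroups coincide.

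The only non-formal point is this converse's reduction of a class in $\Gamma_{M\times M'}$ — a priori the flux of a possibly \emph{non-split} loop — to a split path. This is handled by re-realizing the \emph{same} cohomology class $(a,a')$ as the flux of a split path and noting that membership in $\Gamma_{M\times M'}$ is a statement about the cohomology class alone (it says the class descends to $0$ in the quotient), independent of which path or loop produced it; surjectivity of the flux on paths is exactly what permits this re-realization, so this is the step I would treat most carefully. Finally, to reconcile the ``non-Hamiltonian of each component'' phrasing, I would remark that $\psi\in\ham{\momegp}$ forces $\id_M\times\psi\in\ham{(M\times M',\omega\oplus\omega')}$ (pull back a generating Hamiltonian by the projection), so that in the presence of an obstruction one may always arrange a pair with both factors non-Hamiltonian; the quantitative strengthening of this is precisely Theorem~\ref{theo:appli-flux-groups}.
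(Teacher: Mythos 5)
Your argument is correct and is, in substance, the same as the paper's: the paper packages the two implications into a commutative diagram built from the short exact sequence \eqref{eq:ses-flux} and reduces everything to the injectivity of the projection $p'\co H^1_\mathrm{dR}(M\times M')/(\Gamma_M\oplus\Gamma_{M'})\to H^1_\mathrm{dR}(M\times M')/\Gamma_{M\times M'}$, whereas you unwind that injectivity element by element, using surjectivity of the flux on paths and the splitting $\flux([\Phi\times\Psi])=\flux([\Phi])\oplus\flux([\Psi])$. The ingredients and the logical content are identical; only the packaging differs.

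The one place where your write-up is thinner than it should be is the closing remark reconciling what you actually prove (the groups differ if and only if some split Hamiltonian $\phi\times\psi$ has \emph{at least one} non-Hamiltonian factor) with the lemma's phrasing (\emph{both} factors non-Hamiltonian). Your observation that $\psi\in\ham{\momegp}$ forces $\id_M\times\psi\in\ham{(M\times M',\omega\oplus\omega')}$ lets you absorb a Hamiltonian factor and reduce to a pair of the form $\phi\times\id_{M'}$ with $\phi$ non-Hamiltonian --- but that pair still has one Hamiltonian factor, so it does not produce the ``both non-Hamiltonian'' witness. What is actually needed is the converse implication ($\id_M\times\psi$ Hamiltonian $\Rightarrow$ $\psi$ Hamiltonian, equivalently $\Gamma_{M\times M'}\cap(\{0\}\oplus H^1_\mathrm{dR}(M'))=\{0\}\oplus\Gamma_{M'}$), which is the ``same nature'' observation the paper invokes at the start of the proof of Theorem \ref{theo:appli-flux-groups}. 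You should either supply that observation or state the condition as ``every split Hamiltonian diffeomorphism of the product is a product of Hamiltonian diffeomorphisms,'' which is what your argument literally establishes (and is how the introduction phrases it). To be fair, the paper's own proof passes over this same point without comment.
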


\begin{rema}
  This easy lemma is quite remarkable since it tells us that the ``difference'' between the flux group of the product and the product of the flux groups comes from certain \textit{split} Hamiltonian diffeomorphisms of the product. (In view of the definition of the flux group, one might have thought that it would actually come from non-split symplectomorphisms.)
\end{rema}

\begin{proof}
  Derive from \eqref{eq:ses-flux} the following commutative diagram
\begin{align*}
  \xymatrix{\relax
     \sympo{\momeg}\times \sympo{\momegp} \ar[d]_\varpi \ar[r] &H^1_\mathrm{dR}(M)/\Gamma_M\times H^1_\mathrm{dR}(M')/\Gamma_{M'} \ar[d]^p\ar[r] &0\\
     \sympo{(M\times M',\omega\oplus\omega')} \ar[r] &H^1_\mathrm{dR}(M\times M')/\Gamma_{M\times M'} \ar[r] &0
  }
\end{align*}
where $p$ is the composition
\begin{align*}
\xymatrix@C=.8cm{\relax
  H^1_\mathrm{dR}(M)/\Gamma_M\times H^1_\mathrm{dR}(M')/\Gamma_{M'} \ar[rd]_p \ar@{}[r]|{\hspace{0.3cm}\simeq} & H^1_\mathrm{dR}(M\times M')/\Gamma_{M}\oplus \Gamma_{M'}\ar[d]^{p'} \\
  & H^1_\mathrm{dR}(M\times M')/\Gamma_{M\times M'}
}
\end{align*}
with $p'$ the obvious projection: $\Gamma_{M}\oplus \Gamma_{M'}$ being a subgroup of $\Gamma_{M\times M'}$. Now these groups coincide if and only if $p'$ is injective, that is, if and only if $p$ is injective.

Since the flux is surjective and has $\mathrm{Ham}$ as kernel (as indicated by \eqref{eq:ses-flux}), the isomorphism $\Gamma_{M}\oplus \Gamma_{M'}\simeq\Gamma_{M\times M'}$ holds if and only if there is no split Hamiltonian diffeomorphism of the product, $\phi\times\psi$, with $\phi$ and $\psi$ non-Hamiltonian. 
\end{proof}

Let us now prove Theorem \ref{theo:appli-flux-groups} which gives information on this obstruction, namely, that if such a split Hamiltonian diffeomorphism exists, then the distance from $\psi$ to the Hamiltonian diffeomorphism group is bounded from below by a constant which only depends on $\phi$.
\begin{proof}[Proof of Theorem \ref{theo:appli-flux-groups}]
  Let $\phi$ and $\psi$ be symplectomorphisms of $\momeg$ and $\momegp$ respectively. First notice that if $\phi\times\psi$ is Hamiltonian, then $\psi$ and $\phi$ have the same nature, that is, they are \emph{both} either Hamiltonian or non-Hamiltonian. (An easy way to see this is by additivity of the flux: $\flux(\phi\times\psi)=\flux(\phi)+\flux(\psi)$, and the fact that the kernel of the flux consists of the Hamiltonian diffeomorphisms.) 

Notice that this already tells that when $\phi$ is not Hamiltonian, the map
\begin{align*}
  \sympo{(M',\omega')} \longrightarrow \sympo{(M\times M',\omega\oplus\omega')} \;, \qquad \psi \longmapsto \phi\times \psi
\end{align*}
takes its values in the non-Hamiltonian diffeomorphisms of the product when restricted to $\ham{\momegp}$. Let us prove the existence of a $\Delta$--neighborhood of $\ham{\momegp}$ for which this still holds. 

We need to choose a pair $(\mu,c)$ for each manifold involved. Let fix $c_M=c_{M'}=c_{M\times M'}\neq 0$ (and then forget about these in the notation). We choose $\mu_M$ and $\mu_{M'}$, endomorphisms from $Z^1(M)$ to $B^1(M)$ and from $Z^1(M')$ to $B^1(M')$ respectively and define $\mu_{M\times M'}$ in such way that $\mu_{M\times M'}=\mu_M\oplus\mu_{M'}$ on $Z^1(M)\oplus Z^1(M')\subset Z^1(M\times M')$ (notice that $\im(\mu_{M\times M'})\subset B^1(M)\oplus B^1(M')\subset B^1(M\times M')$). We will simply denote $\mu_{M\times M'}$ by $\mu$.

We also choose metrics $g_M$ and $g_{M'}$ on $M$ and $M'$ and define a metric $g$ on $M\times M'$ accordingly. Notice that the direct sum $Z^1(M)\oplus Z^1(M')\subset Z^1(M\times M')$ is orthogonal with respect to the $L^2$--norm induced by this choice. Finally, we choose a norm on $B^1(M\times M')$ which induces norms on $B^1(M)$ and $B^1(M')$.

Assume that $\phi\times \psi$ is Hamiltonian but that $\phi$ (and thus $\psi$) is not. Then, by the triangle inequality property satisfied by $\Delta$, 
\begin{align*}
  0< \Delta_\mu (\phi^{-1}\times\id_{M'}) &= \Delta_\mu ((\phi^{-1}\times\psi^{-1})\circ (\id_M\times\psi))\\
&\leq  \Delta_\mu (\phi^{-1}\times\psi^{-1}) +  \Delta_\mu (\id_M\times\psi) = \Delta_\mu (\id_M\times\psi)
\end{align*}
since $\phi^{-1}\times\psi^{-1}=(\phi\times\psi)^{-1}$ is Hamiltonian. Moreover, we also have
\begin{align*}
  0< \Delta_\mu (\id_M\times\psi) &= \Delta_\mu ((\phi\times\psi)\circ (\phi^{-1}\times \id_{M'}))\\
&\leq  \Delta_\mu (\phi\times\psi) +  \Delta_\mu ((\phi^{-1}\times \id_{M'}) = \Delta_\mu (\phi^{-1}\times\id_{M'})
\end{align*}
and thus $\Delta_\mu (\id_M\times\psi)=\Delta_\mu (\phi^{-1}\times\id_{M'})$, which is non-zero since $\psi$ is not Hamiltonian\footnote{We suspect that this equality, from which the proof follows, leads to other facts of interest.}. We put $\eps(\phi)=\Delta_\mu (\phi^{-1}\times\id_{M'})$.

Let $\theta\in \sympo{\momegp}$. To any path $\Theta$ in $\sympo{\momegp}$ with $\theta_0=\id_{M'}$ and $\theta_1=\theta$, corresponds $\id_M\times \Theta$, path of symplectomorphisms of the product. Thanks to our choices of homomorphisms, $\mu(\alpha(\id_M\times \Theta))=\mu_{M'}(\alpha(\Theta))$ and in view of our choices (of metrics, norms and constants), corresponding paths, $\Theta$ and $\id_M\times \Theta$, have same length, namely:
\begin{align*}
  \ell_\mu(\id_M\times\Theta) = \ell_{\mu_{M'}}(\Theta) \;.
\end{align*}
Thus, for all $\varphi\in \ham{\momegp}$:
\begin{align*}
   d_\mu(\id_M\times \varphi, \id_M\times\psi) = d_\mu(\id_{M\times M'}, \id_M\times\varphi^{-1}\psi) \leq \ell_{\mu}(\id_M\times\Theta) = \ell_{\mu_{M'}}(\Theta) 
\end{align*}
for any $\Theta\subset\sympo{\momegp}$ connecting $\id_{M'}$ to $\varphi^{-1}\psi$. By taking the infimum over all such paths $\Theta$ we derive the inequality $d_\mu(\id_M\times \varphi, \id_M\times\psi) \leq d_{\mu_{M'}}(\varphi,\psi)$ which in turn leads to
\begin{align*}
  \Delta_\mu(\id_M\times\psi) \leq d_\mu(\id_M\times \varphi, \id_M\times\psi) \leq d_{\mu_{M'}}(\varphi, \psi)
\end{align*}
for all $\varphi\in \ham{\momegp}$. Again, by taking the infimum over all Hamiltonian $\varphi$'s, we conclude that if $\phi\times\psi$ is a Hamiltonian diffeomorphism of the product, necessarily
\begin{align*}
   \Delta_{\mu_{M'}}(\psi)\geq  \Delta_\mu(\id_M\times\psi)= \eps(\phi)  
\end{align*}
which concludes the proof of the theorem.
\end{proof}

\section{Remarks about Condition $(\dagger)$}\label{sec:ConditionDagger}

%Added after referee's comment no 6

In this section we discuss the geometric meaning of condition $(\dagger)$, and deliver some evidence toward the fact that operators satisfying $(\dagger)$ do not exist. This might seem disappointing at first, since $(\dagger)$ implies several nice properties on the lengths and distance functions. However, we will prove at the end of this section (see \S \ref{sec:proof-prop-daggerHamAPoint}) the following fact.
\begin{prop} \label{prop:dagger-HamAPoint}
	Let $\mu$ satisfy $(\dagger)$ and $n_{(\mu,c)}$ be a $\ham{\momeg}$--invariant norm. Either the induced distance, $d_{(\mu,c)}$, vanishes on $\mathrm{Ham}(M,\omega)$ or $\mu$ is injective.
\end{prop}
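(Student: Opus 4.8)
The plan is to compare the restriction of $d_{(\mu,c)}$ to $\ham{\momeg}$ with the \emph{bi-invariant} Finsler metric that $n_{(\mu,c)}$ generates on $B^1(M)$, and to feed the latter into the Buhovsky--Ostrover dichotomy \cite{BuhovskyOstrover10}. The key preliminary observation is that, under the identification of the Lie algebra of $\ham{\momeg}$ with $B^1(M)$, the adjoint action is $\mathrm{Ad}_\phi\alpha=(\phi^{-1})^*\alpha$; hence the hypothesis that $n_{(\mu,c)}$ is $\ham{\momeg}$--invariant says precisely that $n_{(\mu,c)}|_{B^1(M)}$ is $\mathrm{Ad}$--invariant, so the distance $d^B_{(\mu,c)}$ introduced before Proposition~\ref{prop:comparison-with-nb} is genuinely bi-invariant. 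By \cite{BuhovskyOstrover10}, such a distance is either identically zero or equivalent to Hofer's distance. If it is identically zero then, since $d_{(\mu,c)}\leq d^B_{(\mu,c)}$ on $\ham{\momeg}$ by Proposition~\ref{prop:comparison-with-nb}, $d_{(\mu,c)}$ vanishes on $\ham{\momeg}$ and we are in the first alternative. I would then be left to show that equivalence to Hofer forces $\mu$ to be injective.

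Assume then $d^B_{(\mu,c)}\geq\kappa\,d_{\mathrm{Hofer}}$ for some $\kappa>0$. The first step is to convert this into a pointwise estimate on the generating norm. For $\alpha=dH\in B^1(M)$ let $\{\varphi^\alpha_t\}$ be the autonomous Hamiltonian flow generated by $\alpha$, so that $d^B_{(\mu,c)}(\id,\varphi^\alpha_t)\leq t\,n_{(\mu,c)}(\alpha)$; combining this with $d^B_{(\mu,c)}(\id,\varphi^\alpha_t)\geq\kappa\,d_{\mathrm{Hofer}}(\id,\varphi^\alpha_t)$, dividing by $t$ and letting $t\to0^+$ (where $d_{\mathrm{Hofer}}(\id,\varphi^\alpha_t)/t\to\osc(H)$, since short autonomous paths minimize Hofer length) would give
\[ n_{(\mu,c)}(dH)\;\geq\;\kappa\,\osc(H)\qquad\text{for all } H\in C^\infty(M). \]

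Next, suppose for contradiction that $\mu$ is not injective (note $c\neq0$ here, by Lemma~\ref{lemm:criterion-SSN-norms}). Condition $(\dagger)$ implies $\alpha\in\ker\mu\Leftrightarrow\phi^*\alpha\in\ker\mu$ for every $\phi\in\sympo{\momeg}$ (since $n^B$ is non-degenerate, $\mu(\phi^*\alpha)=0$ iff $n^B(\mu(\phi^*\alpha))=0$ iff $n^B(\mu(\alpha))=0$ iff $\mu(\alpha)=0$), so $\ker\mu$ is a nonzero $\sympo{\momeg}$--invariant subspace of $Z^1(M)$. I claim $K:=\ker\mu\cap B^1(M)\neq\{0\}$. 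Indeed, were $K=\{0\}$, every $\alpha\in\ker\mu$ would satisfy $\phi^*\alpha-\alpha\in K=\{0\}$ for all $\phi\in\ham{\momeg}$; differentiating $\phi^*\alpha=\alpha$ along the flow of the Hamiltonian vector field $X_H$ gives $\m L_{X_H}\alpha=d(\alpha(X_H))=0$, and writing $\alpha=\omega(Y,\cdot)$ this reads $Y(H)\equiv\mathrm{const}$ for every $H\in C^\infty(M)$, whence $Y\equiv0$ and $\alpha=0$, contradicting $\ker\mu\neq\{0\}$. Thus $K$ is a nonzero $\ham{\momeg}$--invariant subspace of $B^1(M)$ on which $n_{(\mu,c)}$ reduces to $c\,\|\cdot\|_{L^2}$ (because $\mu$, and hence the $n^B$--term, vanishes there).

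Finally I would reach a contradiction. By the irreducibility analysis of this section (cf. Corollary~\ref{theo:non-trivial-intersection}), a nonzero invariant subspace such as $K$ is $C^\infty$--dense in $B^1(M)$. On the other hand, since $\dim M\geq2$ the embedding $W^{1,2}\hookrightarrow C^0$ fails: there are smooth functions with $\osc(H)\geq1$ yet $\|dH\|_{L^2}$ as small as we like (e.g.\ logarithmic cut-off profiles). Approximating such an $H$ in the $C^\infty$--topology by elements of $K$ yields $dH'\in K$ with $\osc(H')\geq\tfrac12$ but $c\,\|dH'\|_{L^2}<\tfrac{\kappa}{2}$, contradicting $n_{(\mu,c)}(dH')=c\,\|dH'\|_{L^2}\geq\kappa\,\osc(H')$ from the second step. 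Hence $\mu$ is injective, which is the second alternative. The main obstacle is the density input in this last paragraph: what is really needed is the topological irreducibility of the \emph{pullback action of the symplectic} (not merely the full diffeomorphism) group, so that the $\sympo{\momeg}$--invariant subspace $K$ is forced to be dense; this is the delicate ingredient, the remaining steps being the routine invocation of \cite{BuhovskyOstrover10}, the standard Finsler derivative estimate, and the classical Sobolev obstruction.
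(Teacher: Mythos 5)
Your overall strategy is workable in outline but has two genuine gaps, neither of which can be filled with the tools available in the paper. First, the Buhovsky--Ostrover dichotomy is not directly applicable as you invoke it: their theorem requires the $\mathrm{Ad}$--invariant norm to be dominated by the $L^\infty$--norm, and $n_{(\mu,c)}(dH)=n^B(\mu(dH))+c\,\|dH-\mu(dH)\|_{L^2}$ contains an $L^2$--term on the differential which is \emph{not} controlled by $\osc(H)$ (take $H$ with small oscillation but rapidly oscillating derivative). This is precisely why the paper only claims that ``a slightly different version'' of the proposition follows from \cite{BuhovskyOstrover10}. Second, and more seriously, your endgame needs elements $dH'\in K=\ker\mu\cap B^1(M)$ with $\osc(H')$ bounded below and $\|dH'\|_{L^2}$ arbitrarily small, which you propose to extract from the $C^\infty$--density of $K$ in $B^1(M)$. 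That density is exactly the kind of statement the paper does \emph{not} prove: what is actually established is only that nontrivial invariant subspaces are infinite--dimensional (Lemma \ref{lemm:infinite-codim-Inv-subspaces}) and intersect pairwise nontrivially (Corollary \ref{theo:non-trivial-intersection}); anything stronger about the structure of invariant subspaces is left at the level of Conjecture \ref{conj:nonexist-dagger-mu} and the surrounding discussion. So the second branch of your dichotomy is not closed.

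The missing idea is that you never exploit the simplicity of $\ham{\momeg}$, which is what makes the paper's proof short and frees it from any quantitative input. The paper argues as follows: condition $(\dagger)$ (in fact only its restriction to Hamiltonian pullbacks) makes $\ker\mu$ a $\ham{\momeg}$--invariant subspace; if $\mu$ is not injective, this kernel is infinite--dimensional by the mechanism of Lemma \ref{lemm:infinite-codim-Inv-subspaces} and hence meets $B^1(M)$ nontrivially; the restriction of $d_{(\mu,c)}$ to $\ham{\momeg}$ is then a degenerate bi--invariant pseudo--distance, so its null set is a nontrivial normal subgroup of $\ham{\momeg}$, and Banyaga's simplicity theorem \cite{Banyaga78} forces that null set to be all of $\ham{\momeg}$. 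In other words, simplicity converts ``degenerate somewhere'' into ``vanishes identically'', so one only needs a single nontrivial element at distance zero rather than the subspace--wide $\osc$ versus $L^2$ estimate your Sobolev argument requires. Your verification that $K\neq\{0\}$ (differentiating $\phi^*\alpha=\alpha$ along Hamiltonian flows) is correct and is a clean alternative to the paper's treatment of that sub-step; the rest of the argument should be rebuilt around the normal--subgroup/simplicity mechanism rather than around \cite{BuhovskyOstrover10}.
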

Now recall (from Remark \ref{rema:injective-mu}) that injective $\mu$'s lead to ``unnatural'' distances (as extensions to $\sympo{\momeg}$ of distances on $\ham{\momeg}$ for manifolds $M$ with non-trivial first De Rham cohomology group), because such operators act non-trivially on an infinite dimensional subspace of $B^1(M)$. 

Then the proposition may be thought\footnote{This is not equivalent since a bi-invariant distance does not necessarily come from an invariant norm. Moreover, we only consider the action of $\ham{\momeg}$ on $B^1(M)$ instead of $\sympo{\momeg}$.} as: When $H_{\mathrm{dR}}^1(M)\neq 0$, the only pseudo-distance on $\ham{\momeg}$ which can be extended to a bi-$\ham{\momeg}$--invariant distance $d_{(\mu,c)}$ on $\sympo{\momeg}$ is the trivial one.

Let us mention that results concerning extensions of distances from $\ham{\momeg}$ to $\sympo{\momeg}$ also appear in Han \cite{Han09} and that a slightly different version of the proposition above can be deduced directly from recent work by Buhovsky and Ostrover \cite{BuhovskyOstrover10} (partially relying on previous observations contained in \cite{OstroverWagner05}).

% End of added paragraphs

\subsection{Additional properties under Condition $(\dagger)$}

As mentioned above in Remark~\ref{rema:ConditionDagger}, when $c=0$, the fact that the endomorphism $\mu$ satisfies 
          \begin{align*}
            \forall\alpha\in Z^1(M), \, \forall\phi\in\symp_0{\momeg}, \quad n^B(\mu(\alpha))=n^B(\mu(\phi^*\alpha))  
            \tag{$\dagger$}
          \end{align*}
(which is equivalent to requiring that $n^B(\mu(\alpha))\leq n^B(\mu(\phi^*\alpha))$ for any $\alpha$ and any $\phi$) has several interesting consequences. In particular, it leads to the equality $\ell_{(\mu,c)}(\alpha(\Phi^{-1} \circ \Psi)) = \ell_{(\mu,c)}(\alpha(\Phi) - \alpha(\Psi))$ which in turn leads to the symmetry of $d_{(\mu,c)}$, that is,
\begin{align*}
  d_{(\mu,c)}(\phi,\psi) =  d_{(\mu,c)}(\psi,\phi)
\end{align*}
for all symplectomorphisms $\phi$ and $\psi$ isotopic to the identity. Thus, under Condition $(\dagger)$, $d_{(\mu,c)}$ is a pseudo-distance. 

It also leads to the fact that, when $c=0$, the lengths of the different ``products'' of paths can be related as follows: $\ell_{(\mu,0)}(\Phi\circ\Psi) \leq \ell_{(\mu,0)}(\Phi\ast_l\Psi) =\ell_{(\mu,0)}(\Phi\ast_r\Psi)$, that is, time-wise composition always generates paths shorter than the ones generated by (left and right) concatenations.

Actually Condition $(\dagger)$ is stronger than the following one:
          \begin{align*}
              \forall\phi\in &\symp_0{\momeg},\; \exists\, c(\phi)\in\bb R \mbox{ s.t. }\\  &\forall\alpha\in Z^1(M),  \quad n^B(\mu(\phi^*\alpha))\leq c(\phi)n^B(\mu(\alpha))  
            \tag{weak--$\dagger$}
          \end{align*}
which obviously corresponds to the fact that the pullback action of $\sympo{\momeg}$ on $Z^1(M)$ is continuous with respect to $n^B\circ\mu$. In this perspective, Condition $(\dagger)$ simply reads that $\sympo{\momeg}$ acts isometrically.
\begin{rema}
  Notice that if $\mu$ is non-trivial and satisfies Condition (weak--$\dagger$), then for all $\phi \in \sympo{\momeg}$, $c(\phi)c(\phi^{-1})\geq 1$ and that (\textit{if we define $c(\phi)$ as the \emph{smallest} constant such that \emph{(weak--$\dagger$)} is satisfied then}) $c(\phi^n) \leq c(\phi)^n$. 

Notice also that, for a fixed $\mu$ -- which does not necessarily satisfy Condition (weak--$\dagger$) --, the symplectomorphisms $\phi$ satisfying 
          \begin{align*}
              \forall\alpha\in Z^1(M),  \quad n^B(\mu(\phi^*\alpha))\leq c(\phi)n^B(\mu(\alpha))  
          \end{align*}
form a monoid in $\sympo{\momeg}$.
\end{rema}

Another way to look at these conditions is the following. The one-to-one correspondence \eqref{eq:correspondences-paths} between $\m P_\id\symp{\momeg}$ and $\m P Z^1(M)$ is in general not compatible with the algebraic structures on these spaces. As mentioned above, $\m P_\id\symp{\momeg}$ endowed with time-wise composition is a group and so is $\m P Z^1(M)$ endowed with time-wise addition. However, composition of paths coincides with a ``twisted'' non-commutative addition on $\m PZ^1(M)$, $+_{\!\!*}$, which we could abstractly define as
\begin{align*}
  \alpha +_{\!\!*} \beta = \alpha + \Psi^*\beta \qquad \mbox{where } \alpha(\Psi)=\alpha \;.
\end{align*}

Under Condition $(\mbox{weak--}\dagger)$, this correspondence
\begin{align*}
       \m C\co (\m P_\id\symp{\momeg},\circ) \lra (\m P Z^1(M),+_{\!\!*})
\end{align*}
is continuous with respect to the ``natural'' norms induced by our lengths above, in the sense that
\begin{align*}
 \ell_{(\mu,0)} (\Phi^{-1}\circ \Psi)  \leq c(\Phi) \, \ell_{(\mu,0)} (\alpha(\Phi)-\alpha(\Psi))
\end{align*}
and is an isometry under Condition $(\dagger)$, since we get
\begin{align*}
  \ell_{(\mu,0)} (\Phi^{-1}\circ \Psi)  = \ell_{(\mu,0)} (\alpha(\Phi)-\alpha(\Psi)) \;.
\end{align*}

Finally, Condition $(\mbox{weak--}\dagger)$ (resp. $(\dagger)$) amounts to the fact that ``taking the inverse'' is continuous (resp. is an isometry) on $\m P_{\id}\symp{\momeg}$, since $\alpha(\Phi^{-1})=-\Phi^*\alpha(\Phi)$.

Condition (resp. weak-) $(\dagger)$ does not only appear from the viewpoint of (continuity) isometry of usual operations on paths with respect to our length but also in terms of invariance, as indicated by Proposition \ref{prop:dagger-HamAPoint}. In particular, one can easily prove the fact that \textit{under condition $(\dagger)$, the restriction of $d_{(\mu,0)}$ to $\ham{\momeg}$ is bi-$\sympo{\momeg}$--invariant}.

\subsection{Toward the non-existence of operators satisfying $(\dagger)$}

For the reasons above, operators $\mu$ satisfying Condition $(\dagger)$ seem more natural for the construction we are describing here. However, we strongly believe that such operators do not exist and we will now justify the following precise conjecture.

\begin{conj}\label{conj:nonexist-dagger-mu}
  There is no non-injective, non-trivial endomorphism $\mu$ satisfying Condition $(\mbox{weak--}\dagger)$.
\end{conj}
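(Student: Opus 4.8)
The plan is to understand the structure of the conjecture first. We seek to show there is no endomorphism $\mu\co Z^1(M)\ra B^1(M)$ which is non-injective, non-trivial, and satisfies condition (weak--$\dagger$), i.e., for each $\phi\in\sympo{\momeg}$ there is $c(\phi)$ with $n^B(\mu(\phi^*\alpha))\leq c(\phi)\,n^B(\mu(\alpha))$ for all $\alpha$. The key observation is that (weak--$\dagger$) says precisely that the pullback action of $\sympo{\momeg}$ on $Z^1(M)$ descends, through $\mu$, to a \emph{continuous} linear representation with respect to the seminorm $n^B\circ\mu$. The natural strategy is to study the subspaces of $Z^1(M)$ that are canonically associated to $\mu$ and are necessarily invariant under this action, namely $\ker\mu$ and its ``complements'', and to derive a contradiction from the representation-theoretic rigidity of the $\sympo{\momeg}$-action (or the $\diffo$-action) on $Z^1(M)$.

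Concretely, I would first argue that $\ker\mu$ must be a $\sympo{\momeg}$--invariant subspace of $Z^1(M)$: if $\mu$ satisfies (weak--$\dagger$) and $\mu(\alpha)=0$, then $n^B(\mu(\phi^*\alpha))\leq c(\phi)\,n^B(\mu(\alpha))=0$, so $\mu(\phi^*\alpha)=0$, giving $\phi^*(\ker\mu)\subseteq\ker\mu$ for every $\phi$. Thus $\ker\mu$ is invariant under the full pullback action. The heart of the matter is then an irreducibility statement for this action: the excerpt already advertises (in Corollary~\ref{theo:non-trivial-intersection}) that the pullback action of $\diffo$ on $Z^1(M)$ is \emph{irreducible} even though $Z^1(M)$ is not simple. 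The plan is to use exactly this irreducibility. Since $\mu$ is non-trivial, $\ker\mu\neq Z^1(M)$; since $\mu$ is non-injective, $\ker\mu\neq\{0\}$. So $\ker\mu$ is a proper nonzero invariant subspace, which must contradict irreducibility of the action — provided the relevant notion of irreducibility (topological, with respect to an appropriate topology, e.g. the one detecting closed invariant subspaces) applies to $\ker\mu$ and provided $\ker\mu$ is closed in that topology.

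The main obstacle I expect lies in reconciling the algebraic invariance of $\ker\mu$ with the \emph{topological} flavour of irreducibility. Irreducibility of the $\diffo$-action will presumably be formulated as the absence of proper nonzero \emph{closed} invariant subspaces (for some natural topology such as the $C^\infty$ or $L^2$ topology on $Z^1(M)$), whereas $\ker\mu$ is only guaranteed to be a linear invariant subspace. The careful step is therefore to show that either $\ker\mu$ is automatically closed, or that its closure $\overline{\ker\mu}$ remains a proper nonzero invariant subspace. Invariance of the closure is immediate (pullback is continuous), and it is nonzero since it contains $\ker\mu\neq\{0\}$; the delicate point is to ensure it is still \emph{proper}, i.e., that $\overline{\ker\mu}\neq Z^1(M)$, which would follow from $\mu$ being continuous with respect to the chosen topology — and here the hypothesis that $n_{(\mu,c)}$ interacts well with $n^B$, together with the finite-dimensionality of a complement of $B^1(M)$ in $Z^1(M)$, should be leveraged.

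An alternative, and perhaps cleaner, route avoids topology on the kernel by working directly with the finite-codimensional structure: since $\mu$ maps into $B^1(M)$ and any complement of $B^1(M)$ in $Z^1(M)$ is finite dimensional (isomorphic to $H^1_{\mathrm{dR}}(M)$), the image and kernel of $\mu$ are controlled up to finite dimension, and one can try to exhibit explicit symplectic (or merely smooth) isotopies whose pullbacks move a chosen $\alpha\in\ker\mu$ out of any candidate invariant subspace, directly contradicting (weak--$\dagger$). In this approach the hard part becomes the explicit construction, for a prescribed nonzero exact form in $\ker\mu$, of a diffeomorphism isotopy $\phi_t$ for which $\mu(\phi_t^*\alpha)$ grows without the bound $c(\phi)\,n^B(\mu(\alpha))=0$ permitting — in other words, showing the orbit of $\ker\mu$ under pullback cannot stay inside $\ker\mu$. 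This is the genuinely geometric input and is where I expect the real work (and the reliance on the irreducibility result of Corollary~\ref{theo:non-trivial-intersection}) to concentrate; everything else is formal.
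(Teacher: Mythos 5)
First, a point of framing: the statement you are trying to prove is stated in the paper as a \emph{conjecture}, and the paper does not prove it --- it only offers partial evidence, namely Lemma~\ref{lemm:infinite-codim-Inv-subspaces} (the kernel of a non-trivial $\mu$ satisfying $(\mbox{weak--}\dagger)$ is a pullback-invariant subspace of infinite codimension) and the intersection lemma leading to Corollary~\ref{theo:non-trivial-intersection}. Your opening step reproduces exactly the paper's Lemma~\ref{lemm:infinite-codim-Inv-subspaces}: $\ker\mu$ is invariant under the pullback action and, when $\mu\neq 0$, has infinite codimension. Up to that point you are aligned with the authors.

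The gap is in the step where you invoke ``irreducibility'' to conclude. Corollary~\ref{theo:non-trivial-intersection} does \emph{not} assert that the pullback action of $\diffo$ (let alone of $\sympo{\momeg}$) on $Z^1(M)$ has no proper nonzero invariant subspaces; indeed $B^1(M)$ itself is a proper nonzero invariant subspace, as the paper points out, and so is the span of the representatives of any fixed cohomology class. The paper's ``irreducible (even though $Z^1(M)$ is not simple)'' means indecomposable: any two non-trivial invariant subspaces intersect non-trivially, so there is no invariant direct-sum splitting. Consequently, ``$\ker\mu$ is a proper nonzero invariant subspace'' contradicts nothing. What would actually be needed is the statement the authors explicitly say they only \emph{suspect}: that every non-trivial invariant subspace of $Z^1(M)$ contains $B^1(M)$, hence has finite codimension --- which, combined with the infinite codimension of $\ker\mu$, would give the contradiction. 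Neither the paper nor your proposal proves this classification; your ``alternative route'' of constructing explicit isotopies that push $\ker\mu$ out of itself is precisely where that unproved content lives, and you correctly flag it as the real work without carrying it out. There is also a secondary mismatch: $\ker\mu$ is a priori only $\sympo{\momeg}$--invariant, while the evidence in the paper concerns $\diffo$--invariance; a $\sympo{\momeg}$--invariant subspace need not be $\diffo$--invariant, so even a strong irreducibility statement for $\diffo$ would not apply directly. In short, the proposal reduces the conjecture to another open statement rather than proving it.
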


By non-trivial, we rule out the operator $\mu=0$ which obviously satisfies $(\dagger)$ and, in case $Z^1(M) = B^1(M)$, $\mu = \id$ (which satisfies $(\dagger)$ when we choose the Hofer norm as norm $n^B$, for it is pullback invariant). We also suspect that the conjectured result holds if we drop the non-injectivity assumption, however, we have less supporting evidence in that case.

Our conjecture is motivated by the following observation:
\begin{lemm}\label{lemm:infinite-codim-Inv-subspaces}
  If $\mu$ satisfies $(\mbox{weak--}\dagger)$, its kernel is a subspace of $Z^1(M)$ which is \emph{invariant with respect to the pullback-action of $\sympo{\momeg}$} and which has \emph{infinite codimension}, as soon as $\mu\neq 0$.
\end{lemm}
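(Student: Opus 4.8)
The plan is to prove that $\ker\mu$ is invariant and of infinite codimension. First I would establish invariance. Recall that $(\mbox{weak--}\dagger)$ says that for each $\phi\in\sympo{\momeg}$ there exists $c(\phi)>0$ with $n^B(\mu(\phi^*\alpha))\leq c(\phi)\, n^B(\mu(\alpha))$ for all $\alpha\in Z^1(M)$. So if $\alpha\in\ker\mu$, i.e. $\mu(\alpha)=0$, then $n^B(\mu(\alpha))=0$, whence $n^B(\mu(\phi^*\alpha))\leq c(\phi)\cdot 0=0$; since $n^B$ is a norm on $B^1(M)$ (hence non-degenerate), this forces $\mu(\phi^*\alpha)=0$, i.e. $\phi^*\alpha\in\ker\mu$. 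As this holds for every $\phi$, the subspace $\ker\mu$ is invariant under the pullback-action of $\sympo{\momeg}$ on $Z^1(M)$.

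Next I would argue that the codimension of $\ker\mu$ is infinite. Since $\mu$ maps $Z^1(M)$ to $B^1(M)$ and $\im\mu\cong Z^1(M)/\ker\mu$, the codimension of $\ker\mu$ equals $\dim(\im\mu)$, so it suffices to show that $\im\mu$ is finite-dimensional (assuming $\mu\neq 0$, so that we are ruling out codimension being merely finite-and-positive; the claim is that it is actually infinite, so I should instead bound codimension from below). The sharper route is to use invariance together with the structure of $B^1(M)$: $\ker\mu$ is a $\sympo{\momeg}$--invariant subspace, and I would invoke the representation-theoretic input the paper is building toward — that the pullback action of $\sympo{\momeg}$ (or $\diffo$) on $Z^1(M)$ has no proper closed invariant subspaces of finite codimension other than those containing $B^1(M)$, reflecting the irreducibility statement advertised as Corollary~\ref{theo:non-trivial-intersection}. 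Concretely, a nonzero $\sympo{\momeg}$--invariant subspace of $B^1(M)$ must be infinite-dimensional because $B^1(M)$ carries no finite-dimensional invariant pieces under the diffeomorphism action; dually, a proper invariant subspace cannot have finite codimension inside $Z^1(M)$ unless $\mu$ is forced to be trivial.

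The key computation would be to exploit the action on exact forms directly: for $\alpha=df$ exact and $\phi$ a symplectomorphism, $\phi^*(df)=d(f\circ\phi)$, so $\sympo{\momeg}$ permutes exact forms through the (nonlinear-orbit but linear-span) action $f\mapsto f\circ\phi$ on $C^\infty(M)/\bb R$. If $\ker\mu$ had finite codimension, then $\ker\mu\cap B^1(M)$ would have finite codimension in $B^1(M)\cong C^\infty(M)/\bb R$, giving a finite-codimensional $\sympo{\momeg}$--invariant subspace of $C^\infty(M)/\bb R$. The plan is to derive a contradiction by showing that the only finite-codimensional invariant subspace of $C^\infty(M)/\bb R$ under pullback is the whole space itself, which (combined with invariance) would force $\mu$ to vanish on $B^1(M)$ and, via the general position of $\im\mu\subset B^1(M)$, ultimately force $\mu=0$.

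The main obstacle I expect is precisely this last representation-theoretic step: ruling out finite-codimensional invariant subspaces of $C^\infty(M)/\bb R$ (equivalently $B^1(M)$) under the pullback action. Establishing invariance is a one-line non-degeneracy argument, but transferring ``invariant'' to ``infinite codimension'' requires genuinely understanding how rigidly $\diffo$ (or $\sympo{\momeg}$) acts on functions — one would want to use that any nonzero invariant subspace must contain functions supported in arbitrarily small balls (by pushing a bump function around with compactly supported symplectomorphisms) and then partition-of-unity or local-flexibility arguments to show such a subspace cannot miss a finite-dimensional complement. I would therefore structure the proof so that the invariance half is self-contained, and the infinite-codimension half either cites the forthcoming irreducibility corollary or reproduces its local flexibility argument in the form needed here.
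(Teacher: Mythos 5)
Your invariance argument is correct and is exactly what the paper intends (the paper dismisses this half as ``clear''): if $\mu(\alpha)=0$ then $n^B(\mu(\phi^*\alpha))\leq c(\phi)\cdot 0=0$, and non-degeneracy of $n^B$ on $B^1(M)$ gives $\phi^*\alpha\in\ker\mu$.

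The infinite-codimension half is where you diverge from the paper, and where your plan does not close. The paper's route is much more economical and never mentions $C^\infty(M)/\bb R$: fix $\beta$ with $\mu(\beta)\neq 0$; applying $(\mbox{weak--}\dagger)$ to $\phi^{-1}$ gives $n^B(\mu(\beta))\leq c(\phi^{-1})\,n^B(\mu(\phi^*\beta))$, so $\phi^*\beta\notin\ker\mu$ for \emph{every} $\phi\in\sympo{\momeg}$; one then takes symplectomorphisms compactly supported in a small disc $U$ on which $\beta$ is non-constant and produces a sequence $\phi_j$ with the $\phi_j^*\beta$ mutually linearly independent, all lying outside $\ker\mu$. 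Your route instead rests on two steps that are not supplied. First, the claim that $C^\infty(M)/\bb R$ has no proper finite-codimensional invariant subspace is only announced, not proved, and the corollary you propose to cite concerns irreducibility of $Z^1(M)$ under $\diffo$ (itself established in the paper only via a weaker intersection lemma), not the finite-codimension refinement you need. Second, and fatally, the endgame fails: even granting that step you only obtain $B^1(M)\subset\ker\mu$, and from there $\mu=0$ does \emph{not} follow when $H^1_{\mathrm{dR}}(M)\neq 0$. Indeed any nonzero linear $\mu$ killing $B^1(M)$ (e.g. the Hodge projection onto harmonic forms followed by any nonzero linear map into $B^1(M)$) factors through $H^1_{\mathrm{dR}}(M)$, hence has kernel of \emph{finite} codimension, yet satisfies $(\dagger)$ with $c(\phi)=1$ because $\phi^*\alpha-\alpha$ is exact for $\phi$ isotopic to the identity, so $\mu(\phi^*\alpha)=\mu(\alpha)$. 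The contradiction you aim for is therefore simply not available along your path.

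That example also shows your instinct that this step is delicate was well founded: linear independence of the $\phi_j^*\beta$ \emph{in} $Z^1(M)$ is strictly weaker than linear independence \emph{modulo} $\ker\mu$, which is what finite codimension is actually about, and this distinction must be confronted (it is glossed over even in the paper's own proof, and the example above shows the statement needs the full strength of ``no $\phi^*\beta$ lies in $\ker\mu$'' together with additional care, or an added hypothesis excluding operators that vanish on $B^1(M)$). If you rewrite the proof, follow the paper's strategy of propagating a single form $\beta$ with $\mu(\beta)\neq 0$ by compactly supported symplectomorphisms, and address explicitly why the resulting forms are independent modulo the kernel rather than merely in $Z^1(M)$.
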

% (Together with the fact that we believe that an invariant subspace of infinite codimension is necessarily trivial, belief which we justify after the proof of the lemma).

\begin{proof}
  If $\mu$ satisfies $(\mbox{weak--}\dagger)$, the invariance property of $\ker \mu$ is clear. Now assume $\mu$ non-trivial, and fix a closed 1--form $\beta$ such that $\mu(\beta) \neq 0$. Since $\mu$ satisfies $(\mbox{weak--}\dagger)$, then necessarily $\phi^* \beta \notin \ker(\mu)$ for all $\phi \in \sympo{\momeg}$.

Choose a small open disc $U\subset M$. The group of symplectomorphisms compactly supported in $U$ is an infinite dimensional subgroup $\mathrm{Symp}_0^U(M,\omega) \subset \sympo{\momeg}$ which, in particular, contains all the results of flows of Hamiltonian vector fields. Since we can flow any point (of $U$) to another, it is clear that as soon as $\beta$ is not constant on $U$, it is possible to produce infinitely many linearly independent functions.

This provides a sequence $\{\phi_j\} \subset \mathrm{Symp}_0^U\momeg$ such that all $\phi_j^*\beta$ are mutually linearly independent which prevent $\ker\mu$ from having finite codimension.
\end{proof}

Now we explain why we suspect that \textit{any non-trivial subspace of $Z^1(M)$ which is invariant by the pullback action of $\sympo{\momeg}$ has finite codimension}. 

Notice that the exact 1--forms constitute an invariant subspace of $Z^1(M)$ and that, more generally, the subspace of $Z^1(M)$ generated by all the representatives of a given de Rham cohomology class (not necessarily 0) is invariant as well. We are convinced that these are essentially all the invariant subspaces or, equivalently, that any non-trivial, invariant subspace of $Z^1(M)$ contains $B^1(M)$ (and thus has finite codimension, smaller than $b^1(M)=\dim H_{\mathrm{dR}}^1(M)$).   

  The following lemma states that the intersection of any two non-trivial pullback-invariant subspaces of $Z^1(M)$ is non-empty and (thus) infinite dimensional. 

Let us denote by $\invg_\alpha=\langle G^*\alpha \rangle_{\bb R}$ the smallest $G$--pullback invariant subspace of $Z^1(M)$ which contains $\alpha$.
\begin{lemm}
  Let $\alpha$ and $\beta$ be any non-zero closed $1$--forms. There exists a non-trivial exact $1$--form $\gamma$ such that $\invd_\gamma \subset \invd_\alpha \cap \invd_\beta$. 
\end{lemm}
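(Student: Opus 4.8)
The plan is to prove the \emph{stronger} statement that $\invd_\alpha\supseteq B^1(M)$ for \emph{every} non-zero closed $1$--form $\alpha$; the lemma then follows at once and, notably, needs no relation whatsoever between $\alpha$ and $\beta$. Indeed, granting this we also have $\invd_\beta\supseteq B^1(M)$, so $B^1(M)\subseteq\invd_\alpha\cap\invd_\beta$. Since the pullback of an exact form is exact, $\invd_\gamma\subseteq B^1(M)$ for any exact $\gamma$; hence choosing \emph{any} non-zero exact $\gamma$ yields $\invd_\gamma\subseteq B^1(M)\subseteq\invd_\alpha\cap\invd_\beta$, as required. The whole work is thus to show that each singly generated invariant subspace absorbs all of $B^1(M)$ on its own.

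First I would record the standard fact that for $g\in\diffo$ and $\alpha$ closed, $g^*\alpha-\alpha$ is exact: integrating $\tfrac{d}{dt}g_t^*\alpha=g_t^*(d\,\iota_{X_t}\alpha)$ along an isotopy $g_t$ from $\id$ to $g$ (using $d\alpha=0$) exhibits it as $d$ of a function. In particular $g^*\alpha-\alpha\in\invd_\alpha\cap B^1(M)$, and since $\invd_\alpha$ is by definition a linear subspace, $E_\alpha:=\invd_\alpha\cap B^1(M)$ is a $\diffo$--invariant linear subspace of $B^1(M)$; the task becomes $E_\alpha=B^1(M)$.

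The crux is a \emph{shear} construction producing prescribed exact differences. Pick a point $p$ with $\alpha_p\neq0$ and a small ball $U\ni p$; by the Poincar\'e lemma $\alpha|_U=df$ with $df\neq0$, and after a change of coordinates we may take $f=x_1$. For $\psi\in C^\infty_c(U)$ with small $C^1$--norm, the map $g(x_1,x')=(x_1+\psi(x_1,x'),x')$ is a diffeomorphism compactly supported in $U$ (its Jacobian determinant is $1+\del_{x_1}\psi>0$), and the rescalings $g_s$, $s\in[0,1]$, show $g\in\diffo$. A direct computation gives $g^*\alpha-\alpha=d\psi$, so $d\psi\in E_\alpha$ for all small $\psi$; since $E_\alpha$ is a vector space, scaling $\psi\mapsto\eps\psi$ and dividing back out shows $d\psi\in E_\alpha$ for \emph{every} $\psi\in C^\infty_c(U)$. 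Thus $E_\alpha$ contains every exact form supported in the single ball $U$. To globalize, note that for any small ball $W\subseteq M$ one can choose $g_0\in\diffo$ with $g_0(W)\subseteq U$ (possible since $M$ is connected and $\diffo$ is homogeneous); then for $\phi\in C^\infty_c(W)$, writing $\phi=(\phi\circ g_0^{-1})\circ g_0$ with $\phi\circ g_0^{-1}\in C^\infty_c(U)$, invariance gives $d\phi=g_0^*\big(d(\phi\circ g_0^{-1})\big)\in E_\alpha$. A partition of unity writes an arbitrary $h\in C^\infty(M)$ as a finite sum of functions each supported in such a ball, whence $dh\in E_\alpha$ and $E_\alpha=B^1(M)$. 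With the reduction of the first paragraph, this proves the lemma.

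I expect the only genuinely delicate point to be the shear step, i.e.\ realizing an arbitrary prescribed exact form $d\psi$ as a finite difference $g^*\alpha-\alpha$ with $g\in\diffo$. This is exactly where the full flexibility of $\diffo$ (as opposed to $\sympo{\momeg}$) is used: for symplectomorphisms the differences $g^*\alpha-\alpha$ are strongly constrained and no such construction is available, which is precisely why the analogous statement for $\sympo{\momeg}$ remains only Conjecture~\ref{conj:nonexist-dagger-mu}.
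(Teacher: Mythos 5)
Your argument is correct, and it takes a genuinely different --- and stronger --- route than the paper's. The paper's proof works with $\alpha$ and $\beta$ simultaneously: it straightens both to $dx_1$ on a common small disc $V$ (composing with a diffeomorphism $\psi$ so that $\psi^*\beta=\alpha$ on $V$), and then a single compactly supported $\varphi$ produces the one exact form $\gamma=\varphi^*\alpha-\alpha=(\psi\varphi)^*\beta-\psi^*\beta$ lying in both invariant subspaces. You instead prove that $\invd_\alpha\supseteq B^1(M)$ for \emph{every} non-zero closed $\alpha$, via the shear $g(x_1,x')=(x_1+\psi(x_1,x'),x')$ in a chart where $\alpha=dx_1$, which gives $g^*\alpha-\alpha=d\psi$ exactly (both sides vanish off $\mathrm{supp}\,\psi$, and on $U$ one has $g^*(dx_1)=d(x_1+\psi)$); the rescaling trick, the transport of supports by elements of $\diffo$, and a partition of unity then correctly yield $dC^\infty(M)\subseteq\invd_\alpha$. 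This is more than the lemma asks for: it shows that every non-trivial $\diffo$--invariant subspace of $Z^1(M)$ contains $B^1(M)$ and hence has codimension at most $b_1(M)$, which is precisely the statement the authors say they believe but only establish in the weakened form of this lemma (``Obviously the lemma above is weaker than what we intend to prove''). What the paper's local-matching argument buys is economy --- one perturbation suffices and no globalization is needed; what yours buys is the sharper structural conclusion for the $\diffo$--action, and your closing remark correctly locates why the argument does not transfer to $\sympo{\momeg}$: the shear is not symplectic, and for symplectomorphisms the accessible differences $g^*\alpha-\alpha$ do not sweep out all of $B^1(M)$ by any such elementary device.
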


As a consequence, we get that for any closed $1$--form $\alpha$, $\invd_\alpha \cap B^1(M) \neq \{ 0 \}$. (However, this result can be derived from the facts that $B^1(M)$ has finite codimension and that the dimension of any pullback invariant subspace is infinite.)

\begin{proof}
   Let $\alpha$ and $\beta$ be non-zero closed $1$--forms. Since $\alpha$ and $\beta$ are regular on some open sets of $M$, up to choosing smaller open sets and taking the pullback of $\beta$ by some appropriate diffeomorphism of $M$, we can assume that both forms are regular on some small open chart $U$ of $M$.

Then, there exist diffeomorphisms $\phi_\alpha$ and $\phi_\beta$ from the unit disc $\bb D^n \subset \bb R^n$ to $V$ (an open set such that $\overline{V}\subset U$) and satisfying $\phi_\alpha^*\alpha=\phi_\beta^*\beta=dx_1$ on $\bb D^n$ (the differential of the projection on the first coordinate).

   Let us consider the diffeomorphism of $\bb D^n$, $\phi = \phi_\alpha^{-1}\circ \phi_\beta$. First we notice that $\phi_\alpha$ and $\phi_\beta$ can be chosen such that $\phi$ preserves the orientation. (Otherwise, replace $\phi_\beta$ by $\phi_\beta' = \phi_\beta\circ (x_2\mapsto -x_2)$, notice that $\phi_\beta^*\beta = (\phi_\beta')^*\beta$.) Since the diffeomorphism group of the disc has two connected components, $\phi$ is isotopic to the identity. This immediately amounts to the fact that the diffeomorphism of $V$, $\psi = \phi_\beta \circ \phi_\alpha^{-1}$, is also isotopic to the identity.

   Thus we can extend $\psi$ to $U$ first, in such a way that $\psi=\id$ near $\del U$ and then extend it to a diffeomorphism of $M$ which is the identity on $M\backslash U$. The resulting diffeomorphism, also denoted $\psi$, is isotopic to the identity.

   Now pick $\varphi$, any non-trivial diffeomorphism of $M$, isotopic to the identity, with support included in $V$. Notice that $\varphi^*\alpha - \alpha = \varphi^*(\psi^*\beta)-\psi^*\beta$. (Indeed, $\alpha = \psi^*\beta$ on $V$ and $\varphi^*\alpha - \alpha = \varphi^*(\psi^*\beta)-\psi^*\beta=0$ on $M\backslash V$.) This $1$--form is thus in the intersection $\invg_\alpha \cap \invg_\beta$.
\end{proof}

Obviously the lemma above is weaker than what we intend to prove and conjectured. (Moreover, it is proved for $G=\diffo$ and not $\sympo{\momeg}$.) Notice, however, that it is sufficient in order to deduce the following corollary.
\begin{coro}\label{theo:non-trivial-intersection}
  Let $M$ be a smooth manifold. The pullback-action of $\diffo$ on $Z^1(M)$ is irreducible (even though $Z^1(M)$ is not simple). 
\end{coro}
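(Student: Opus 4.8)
The plan is to interpret ``irreducible'' as \emph{indecomposable}. Indeed, $B^1(M)$ is a proper, non-trivial, pullback-invariant subspace of $Z^1(M)$ (the pullback of an exact form is exact), so $Z^1(M)$ is manifestly not simple; the content of the corollary must therefore be that $Z^1(M)$ admits no decomposition as a direct sum of two non-trivial invariant subspaces. This is precisely what the non-trivial-intersection property supplied by the preceding lemma (and its corollary) is tailored to deliver.

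First I would argue by contradiction: suppose $Z^1(M) = V \oplus W$ with $V$ and $W$ both non-zero and invariant under the pullback action of $\diffo$, so in particular $V \cap W = \{0\}$. Since $V$ and $W$ are non-trivial I may choose non-zero closed $1$--forms $\alpha \in V$ and $\beta \in W$. Because $V$ is invariant and contains $\alpha$, the minimality of $\invd_\alpha$ forces $\invd_\alpha \subseteq V$, and likewise $\invd_\beta \subseteq W$.

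Next I would invoke the lemma directly for this pair $\alpha$, $\beta$: it produces a non-trivial (exact) $1$--form $\gamma$ with $\invd_\gamma \subseteq \invd_\alpha \cap \invd_\beta$, so in particular $\invd_\alpha \cap \invd_\beta \neq \{0\}$. Consequently $V \cap W \supseteq \invd_\alpha \cap \invd_\beta \neq \{0\}$, contradicting $V \cap W = \{0\}$. This rules out every non-trivial direct-sum decomposition and thereby establishes indecomposability, i.e.\ irreducibility in the intended sense.

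I do not anticipate a genuine obstacle here, since all of the geometric substance is already carried by the lemma; the corollary is a formal consequence of it. The only delicate point is the interpretation of ``irreducible'', and the parenthetical remark that $Z^1(M)$ is not simple makes the intended reading (indecomposability) unambiguous. The argument merely transports the statement ``any two non-trivial invariant subspaces meet non-trivially'' into the equivalent statement ``no complementary pair of non-trivial invariant subspaces can exist''.
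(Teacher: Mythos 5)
Your proof is correct and is exactly the deduction the paper intends: the authors state that the preceding lemma (any two non-trivial pullback-invariant subspaces of $Z^1(M)$ contain a common non-trivial $\invd_\gamma$ and hence meet non-trivially) ``is sufficient in order to deduce'' the corollary, and your contradiction argument via $\invd_\alpha\subseteq V$, $\invd_\beta\subseteq W$ is the obvious way to carry that out. Your reading of ``irreducible'' as indecomposable is also the intended one, as the parenthetical about $Z^1(M)$ not being simple (e.g.\ $B^1(M)$ is a proper invariant subspace) confirms.
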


\subsection{Proof of Proposition \ref{prop:dagger-HamAPoint}}\label{sec:proof-prop-daggerHamAPoint}

Assume that $\mu$ is an operator which satisfies 
          \begin{align*}
            \forall\alpha\in Z^1(M), \, \forall\phi\in\ham{\momeg}, \quad n^B(\mu(\alpha))=n^B(\mu(\phi^*\alpha))  
            \tag{$\dagger'$}
          \end{align*}
(that is, we only require $\mu$ to satisfy Condition $(\dagger)$ with respect to the pullback action on $1$--forms by \textit{Hamiltonian} symplectomorphisms), then its kernel is a $\ham{\momeg}$--invariant subspace of $Z^1(M)$. Then, as in the proof of Lemma \ref{lemm:infinite-codim-Inv-subspaces}, if $\mu$ is not injective, its kernel has infinite dimension and thus intersects $B^1(M)$ non-trivially. Then, the distance $d_{(\mu,c)}$ on $\sympo{\momeg}$ restricts to a bi-$\ham{\momeg}$--invariant distance on $\ham{\momeg}$ which has to be degenerate. 

Thus, the restricted distance has to identically vanish for the null-set of a bi-invariant distance $d$, that is,
\begin{align*}
 \mathrm{Null}(d) =  \{ \phi\in\ham{\momeg} |\, d(\id,\phi)=0 \}  
\end{align*}
is a normal subgroup of $\ham{\momeg}$ and $\ham{\momeg}$ is known to be simple since Banyaga's celebrated paper \cite{Banyaga78}.

\section{Examples}\label{sec:examples}

\subsection{$\diffo$ as endomorphisms from $Z^1(M)$ to $B^1(M)$}

It is quite remarkable that there exists an injective mapping from the group of diffeomorphisms isotopic to the identity of $M$ to the vector space of morphisms from $Z^1(M)$ to $B^1(M)$ given as follows
\begin{align*}
  \diffo \lra \mathrm{Hom}(Z^1(M),B^1(M)) \;, \qquad f\longmapsto [\mu_f \co \alpha \mapsto \alpha-f^*\alpha]  \;.
\end{align*}

\begin{rema}
The mapping $f\mapsto \mu_f$ obviously satisfies the following properties.  
\begin{enumerate}
\item $\mu_f=\mu_g$ if and only if $f=g$.
\item In general, $\mu_f$ is not injective. 
\item $\mu_{f\circ g}=\mu_f+\mu_g(f^*) = \mu_g + g^*(\mu_f)$ and $\mu_f-\mu_g = g^*(\mu_f)-\mu_g(f^*)$.
\item $\mu_{f_1\circ g} - \mu_{f_2\circ g} = g^*(\mu_{f_1} - \mu_{f_2})$ and $\mu_{f\circ g_1} - \mu_{f\circ g_2} = (\mu_{g_1} - \mu_{g_2})(f^*)$.
\item $(g^{-1})^*\mu_g = \mu_g((g^{-1})^*) = - \mu_{g^{-1}}$.
\end{enumerate}
\end{rema}
Notice that $\mu_f$ can be interpreted as a ``localization'' in the sense that the $1$--forms which are obtained via $\mu_f$ vanish outside of the support of $f$ (where $f$ is the identity). However, inside the support of $f$, the resulting 1--form is in general extremely different from $\alpha$.

\subsection{$C^\infty(M)$ as endomorphisms from $Z^1(M)$ to $B^1(M)$}

Not only $\diffo$ provides linear operators from $Z^1(M)$ to $B^1(M)$, but so does $C^\infty(M)$. Indeed, given an \textit{autonomous} Hamiltonian $H \in C^\infty(M)$, we can define an operator $\mu_H$ by putting 
$$ \mu_H\co Z^1(M)\lra B^1(M) \;, \qquad  \alpha \longmapsto d(\alpha(X_H))\;.$$
(Observe that for a non-autonomous $H$, this process would induce a path of $\mu$'s and we would loose the compatibility with reparametrizations.)

\subsection{Banyaga's construction}\label{sec:example-banyaga}

When we specialize our construction to particular data, we are able to precise some aspects of our study. We do so in the case studied by Banyaga since it is the case which motivated this work. 

In what follows, we first sketch Banyaga's construction and establish that we indeed generalize it. Then we make a remark concerning the length of concatenations of paths, in relation with \S\ref{subsection:timewise-composition-and-concatenations} above. Finally, we answer positively a question raised by Banyaga in \cite{Banyaga07}: The distance on the Hamiltonian diffeomorphism group obtained by restricting Banyaga's distance is indeed equivalent to Hofer's distance.

\subsubsection{The Hodge linear operator}

In \cite{Banyaga07}, Banyaga uses Hodge decomposition to define a norm on $Z^1(M)$ as follows. A metric $g$ on $M$ being fixed, recall that $Z^1\simeq \mathcal H \oplus \im d$, where $d$ is the de Rham differential and $\m H$ is the space of harmonic 1--forms with respect to $g$. (Indeed, the space $\im \delta$ appearing as third summand in Hodge decomposition of $p$--forms is automatically $0$ when restricted to \emph{closed} forms.)

Thus any closed 1--form $\alpha$ uniquely decomposes as $du_\alpha + h_\alpha$, with $h_\alpha \in\m H$. Banyaga chooses a norm $\|-\|$ on $\mathcal{H}\simeq H^1_{\mathrm{dR}}(M)$ and then defines the length of $\Phi \in \m P_\id\symp{(M,\omega)}$ as
$$ \ell^B(\Phi) = \int_0^1 \left( \osc u_{\alpha(\Phi)_t} + \| h_{\alpha(\Phi)_t} \|\right)dt$$
where $\osc f=\max f-\min f$ is the ``Hofer norm'' of the exact form $df$.

\begin{lemm}\label{lemm:generalizes-Banyaga}
    Our construction generalizes Banyaga's (in the sense that Banyaga's construction is (equivalent to) a particular case of ours).
\end{lemm}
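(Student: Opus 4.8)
The plan is to exhibit explicit data $(\mu,c)$ and a norm $n^B$ for which the splitting seminorm $n_{(\mu,c)}$ reproduces Banyaga's length functional $\ell^B$, up to equivalence of norms (which, as emphasized in the introduction, is all that matters since the resulting distances are insensitive to replacing norms by equivalent ones). First I would fix the metric $g$ that Banyaga uses and invoke the Hodge decomposition $Z^1(M)\simeq \m H\oplus \im d$, so that every closed form $\alpha$ splits uniquely as $\alpha = h_\alpha + du_\alpha$. This decomposition immediately furnishes the candidate homomorphism: define $\mu\co Z^1(M)\ra B^1(M)$ to be the projection onto the exact part, $\mu(\alpha)=du_\alpha$, which is linear and takes values in $B^1(M)=\im d$ as required. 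With this choice $\alpha-\mu(\alpha)=h_\alpha$ is exactly the harmonic part, so the second term of $n_{(\mu,c)}$ becomes $c\|h_\alpha\|_{L^2}$.

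Next I would pin down the constant and the norm $n^B$. Taking $c=1$ and choosing $n^B$ on $B^1(M)$ to be the Hofer-type norm $n^B(du)=\osc u$ (the oscillation of a primitive, well-defined on exact forms), the splitting seminorm reads
\begin{align*}
  n_{(\mu,1)}(\alpha) = \osc u_\alpha + \|h_\alpha\|_{L^2} \;,
\end{align*}
whose induced length on $\m P_\id\symp{(M,\omega)}$ is precisely
\begin{align*}
  \ell_{(\mu,1)}(\Phi) = \int_0^1\Big(\osc u_{\alpha(\Phi)_t} + \|h_{\alpha(\Phi)_t}\|_{L^2}\Big)\,dt \;.
\end{align*}
Comparing with $\ell^B(\Phi)=\int_0^1(\osc u_{\alpha(\Phi)_t}+\|h_{\alpha(\Phi)_t}\|)\,dt$, the two functionals differ only in the norm applied to the harmonic part: Banyaga uses an arbitrary norm $\|-\|$ on the finite-dimensional space $\m H\simeq H^1_\mathrm{dR}(M)$, whereas ours uses the $L^2$--norm restricted to $\m H$. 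Since $\m H$ is finite dimensional, all norms on it are equivalent, so there exist constants $0<a\leq b$ with $a\|-\|\leq\|-\|_{L^2}\leq b\|-\|$ on $\m H$; integrating these pointwise bounds over $t\in[0,1]$ and leaving the common oscillation term untouched yields $\ell^B$ and $\ell_{(\mu,1)}$ equivalent as length functionals, hence equivalent induced (pseudo-)distances.

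The main obstacle, such as it is, is a bookkeeping one rather than a conceptual one: I must verify that $\mu$ so defined is genuinely a homomorphism of vector spaces (immediate, since the Hodge projection is linear) and, more delicately, that the correspondence of \eqref{eq:correspondences-paths} matches Banyaga's convention for associating a path of closed $1$--forms to a symplectic isotopy, so that the integrands really are the same $\alpha(\Phi)_t$. I would also note that the oscillation $\osc u_\alpha$ is independent of the additive constant in the primitive $u_\alpha$ and therefore descends to a well-defined norm on $B^1(M)$, matching Banyaga's use of the Hofer norm on exact forms. Once these identifications are in place, the equivalence of the two constructions follows formally from the finite-dimensionality of $\m H$, which is the single structural fact doing the work, and the lemma is proved.
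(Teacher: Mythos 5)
Your proposal is correct and follows exactly the same route as the paper: take $\mu^g(\alpha)=du_\alpha$ via Hodge decomposition, $c=1$, and $n^B=\osc$, then invoke equivalence of norms on the finite-dimensional space of harmonic forms. The paper's proof is a one-line version of yours; the extra bookkeeping you spell out (linearity of the Hodge projection, well-definedness of $\osc$ on $B^1(M)$, matching conventions in \eqref{eq:correspondences-paths}) is all sound.
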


\begin{proof}
With the notation above, define $\mu^g$ as $\mu^g(\alpha)=du_\alpha$ and choose the Hofer norm for $n^B$: The (pseudo-)distance induced by $\ell^B$ is equivalent to $n_{(\mu^g,1)}$ since $H^1_{\mathrm{dR}}(M)$ is finite dimensional.
\end{proof}

The length $\ell^B$ is a quite particular case of $\ell_{(\mu,c)}$'s  (even if {we restrict ourselves to $c=1$ and $n^B$ is the Hofer norm}) as indicated by the following lemma.
\begin{lemm}\label{lemm:closed 1-forms not harmonic}
Let $M$ be a Riemannian manifold such that $\dim H^1_{\mathrm{dR}} (M)\geq 1$. There exist closed $1$--forms which are not harmonic for any choice of metric on $M$.
\end{lemm}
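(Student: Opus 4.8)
The plan is to exhibit a single closed $1$--form that fails the harmonicity condition for \emph{every} metric simultaneously, the key mechanism being the unique continuation property of the Hodge Laplacian. Recall that a closed $1$--form $\alpha$ is $g$--harmonic precisely when $\delta_g\alpha=0$, i.e. when $\Delta_g\alpha=0$; since $\Delta_g=d\delta_g+\delta_g d$ is a Laplace--type (hence second order elliptic with scalar principal symbol) operator, Aronszajn's theorem applies and any $g$--harmonic form vanishing on a nonempty open set vanishes identically on the connected component. Assuming $M$ connected (as elsewhere in the paper), the strategy is therefore to produce a closed $1$--form which vanishes on an open set yet is not globally zero: by the above, such a form can be $g$--harmonic for no metric $g$ at all.

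I would construct the form by a cutoff argument. Since $b_1(M)\geq 1$, fix a closed $1$--form $h$ with $[h]\neq 0$ in $H^1_{\mathrm{dR}}(M)$, and pick a point $p$ with $h_p\neq 0$. On a ball $B\ni p$ the Poincar\'e lemma provides a local primitive, $h|_B=d\phi$. Choosing a cutoff function $\chi$ supported in $B$ and equal to $1$ on a smaller ball $B'\subset B$, I set $\alpha=h-d(\chi\phi)$. Then $\alpha$ is closed; the term $d(\chi\phi)$ is globally exact, so $[\alpha]=[h]\neq 0$ and in particular $\alpha\not\equiv 0$; yet on $B'$ one has $\alpha|_{B'}=h|_{B'}-d\phi|_{B'}=0$. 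Thus $\alpha$ vanishes on the open set $B'$ without vanishing identically, and the unique continuation statement above shows that $\alpha$ is harmonic for no choice of metric. This proves the lemma, and it does so with a cohomologically \emph{nontrivial} (in particular non-exact) example, which is the point of interest here.

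The only nontrivial input is the unique continuation property, which I would invoke as a standard fact from elliptic theory rather than reprove; everything else is the elementary cutoff construction. I would add a remark that the hypothesis $b_1(M)\geq 1$ serves only to guarantee a cohomologically nontrivial $h$ (so that $\alpha\not\equiv 0$ is witnessed by a nonzero period). Should one accept cohomologically trivial witnesses, the statement is immediate on a closed manifold, since a nonzero exact form $df$ with $f$ nonconstant can never be $g$--harmonic: if it were, it would equal the unique $g$--harmonic representative of the class $[df]=0$, namely the zero form, forcing $df=0$. Presenting the non-exact construction as the main argument makes clear that the phenomenon persists among genuinely non-exact closed forms, which is precisely what distinguishes $\ell_{(\mu,c)}$ from Banyaga's $\ell^B$.
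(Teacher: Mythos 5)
Your proof is correct, but the mechanism you use to rule out harmonicity is genuinely different from the paper's. Both arguments begin the same way: take a non-exact closed $1$--form and subtract off (a cutoff of) a local primitive so as to obtain a closed, non-exact form vanishing on a nonempty open set. At that point you stop and invoke Aronszajn's unique continuation theorem for the Hodge Laplacian (legitimate, since $\Delta_g$ is second order elliptic with scalar principal symbol, and a $g$--harmonic form satisfies $\Delta_g\alpha=0$), concluding that a nonzero harmonic form cannot vanish on an open set of a connected manifold. The paper instead adds back a second exact piece: it chooses $f'$ supported in $U'\Subset U$ and considers $\alpha''=\alpha'+df'$, which equals $df'$ on $U'$ and $\alpha'$ (hence $0$) on $U\setminus U'$; then, for any metric $g$, pairing against $f'$ gives $\langle f',\delta_g\alpha''\rangle_g=\langle df',\alpha''\rangle_g=\int_{U'}g(\nabla^g f',\nabla^g f')>0$, so $\delta_g\alpha''\neq 0$. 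The trade-off is clear: the paper's route is entirely elementary and self-contained, using only that $\delta_g$ is the formal adjoint of $d$ and that $g$ is positive definite, whereas yours is shorter and needs one less auxiliary function but imports unique continuation, a substantially heavier (though standard) piece of elliptic theory than anything else in the paper. Your closing remark about exact witnesses is also fine on a closed manifold, and your observation that the hypothesis $b_1(M)\geq 1$ only serves to produce a cohomologically nontrivial example matches the paper's intent.
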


This result is probably well-known, but we did not find it as such in the literature. It is also quite easy to prove and we include a (constructive) proof here for the reader's convenience. 
\begin{proof}
Let $M$ be a Riemannian manifold such that $\dim H^1_{\mathrm{dR}} (M)\geq 1$. Let $\alpha$ be a closed $1$--form which is not exact. First, choose a chart $(U,\psi)$ of $M$, that is an open subset $U$ and a diffeomorphism $\psi\co U\ra \psi(U)=B(0,\eps)\subset\bb R^n$. The $1$--form $(\psi^{-1})^*\alpha$ is closed in $B(0,\eps)$ and thus exact: $(\psi^{-1})^*\alpha = df$ for some $f\co B(0,\eps)\ra \bb R$. We extend $f\circ \psi$ in any way on $M$ and call it $f$. The $1$--form $\alpha'=\alpha-df$ is still closed but not exact and vanishes on $U$. 

Now, choose an open set of $M$, $U'$, whose closure is included in $U$, and a function $f'\co M\ra \bb R$ with support included in $U'$. We define a third 1--form $\alpha''=\alpha' + df'$ which coincides with $\alpha'$ everywhere but on $U'$ (where it coincides with $df'$).

This latter $1$--form cannot be harmonic for any metric $g$ on $M$. Indeed, for closed forms, being harmonic is equivalent to being in the kernel of the co-differential $\delta$. Assume that for some metric $g$, $\delta \alpha''=0$. Then for any function ($0$--form) $h$ on $M$, 
\begin{align*}
0= \langle h,\delta\alpha'' \rangle_g = \langle dh,\alpha'' \rangle_g = \int_M \alpha''(\nabla^g h) \;.
\end{align*}
Now for $h=f'$, we get 
\begin{align*}
\int_M \alpha''(\nabla^g f') &= \int_{U'} \alpha''(\nabla^g f') + \int_{M\backslash U'} \alpha''(\nabla^g f') \\
& = \int_{U'} \big( \alpha'(\nabla^g f') + df'(\nabla^g f') \big) = \int_{U'} g(\nabla^g f',\nabla^g f') > 0
\end{align*}
since $\nabla^g f'=0$ on $M\backslash U'$ and $\alpha'=0$ on $U'$. Thus, $\alpha''$ is not harmonic.
\end{proof}

\subsubsection{Explicit non length-minimizing paths}\label{sec:difference-concatenations-Banyagas-case}

In view of remarks on concatenations of \S\ref{subsection:timewise-composition-and-concatenations}, we are able to produce non $\ell^B$--minimizing paths of symplectomorphisms.

Assume that $H^1_{\mathrm{dR}}(M)\neq 0$ and choose any non-zero de Rham cohomology class of degree $1$, $[\alpha]$. Recall that there is a unique harmonic $1$--form $h_{[\alpha]}\in[\alpha]$. 

Consider the path of symplectomorphisms induced by $h_{[\alpha]}$, that is, defined as
$$\psi_0 = \id \qquad\mbox{and}\qquad \del_t\psi_t = X(\psi_t)  $$
where $X$ is defined as the unique vector field such that $\omega(X,\cdot\,)=h_{[\alpha]}$. We denote $\psi_1$ by $\psi$.

\begin{lemm}\label{lemm:concatenation-not-minimiz}
  Let $\varphi\in \sympo{\momeg}$, with $\varphi\neq\psi$. There is no $\ell^B$--minimizing path from $\id$ to $\varphi$ which ``factorizes'' (in terms of right concatenation) through $\Psi$. In other words, paths of the form $\Phi\ast_r \Psi$, with $\phi_1\neq\id$ cannot be $\ell^B$--minimal.
\end{lemm}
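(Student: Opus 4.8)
The plan is to beat every such right concatenation by the corresponding \emph{left} concatenation $\Phi\ast_l\Psi$, which joins the same two endpoints $\id$ and $\phi_1\psi=\varphi$, and is therefore a legitimate competitor. Specializing the general concatenation formulas \eqref{eq:equality-length-concatenation-right} and \eqref{eq:equality-length-concatenation} to the Banyaga length (the case $c=1$ with $n^B$ the Hofer norm, by Lemma \ref{lemm:generalizes-Banyaga}) gives $\ell^B(\Phi\ast_r\Psi)=\ell^B(\Phi)+\ell^B(\phi_1\Psi)$ and $\ell^B(\Phi\ast_l\Psi)=\ell^B(\Phi)+\ell^B(\Psi)$. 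Hence the whole comparison collapses to measuring the length defect $\ell^B(\phi_1\Psi)-\ell^B(\Psi)$ created by pushing the harmonic flow forward by $\phi_1$.

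First I would compute both lengths explicitly. Since $\Psi$ is the autonomous flow of the field dual to the harmonic form $h_{[\alpha]}$, one has $\alpha(\Psi)_t=h_{[\alpha]}$ for every $t$; its exact part vanishes, so $\ell^B(\Psi)=\|h_{[\alpha]}\|$. For the left-translated path $\phi_1\Psi\co t\mapsto\phi_1\psi_t$ the associated form is the $t$-independent closed form $\alpha(\phi_1\Psi)_t=(\phi_1^{-1})^*h_{[\alpha]}$ (exactly the factor $(\phi^{-1})^*$ occurring in the second branch of $\alpha(\Phi\ast_r\Psi)$). As $\phi_1$ is isotopic to the identity it acts trivially on $H^1_{\mathrm{dR}}(M)$, so $(\phi_1^{-1})^*h_{[\alpha]}$ still represents $[\alpha]$ and its Hodge decomposition reads $(\phi_1^{-1})^*h_{[\alpha]}=h_{[\alpha]}+du$ with $du=(\phi_1^{-1})^*h_{[\alpha]}-h_{[\alpha]}$ exact. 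Therefore $\ell^B(\phi_1\Psi)=\osc u+\|h_{[\alpha]}\|$, and consequently $\ell^B(\Phi\ast_r\Psi)-\ell^B(\Phi\ast_l\Psi)=\osc u\ge 0$.

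This settles the claim whenever the pushed-forward form $(\phi_1^{-1})^*h_{[\alpha]}$ fails to be harmonic, i.e.\ whenever $du\neq 0$: then $\osc u>0$, the left concatenation is strictly shorter, and $\Phi\ast_r\Psi$ cannot be $\ell^B$-minimal. The hard part will be the degenerate situation $(\phi_1^{-1})^*h_{[\alpha]}=h_{[\alpha]}$, equivalently $\phi_1$ preserving the generating vector field (so that $\phi_1$ commutes with the whole flow $\{\psi_t\}$) while $\phi_1\neq\id$; there the two concatenations have equal length and the comparison above is inconclusive. To dispose of this case I would not append the two motions but run them \emph{simultaneously}, replacing $\Phi\ast_r\Psi$ by $\Xi_t=\psi_t\phi_t$, which shares the endpoint $\psi\phi_1=\phi_1\psi=\varphi$ by commutativity and satisfies $\alpha(\Xi)_t=h_{[\alpha]}+(\psi_t^{-1})^*\alpha(\Phi)_t$, with harmonic part $h_{[\alpha]}+h_{[\alpha(\Phi)_t]}$. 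Bounding $\ell^B(\Xi)$ against $\ell^B(\Phi)+\|h_{[\alpha]}\|$ then hinges on the strict convexity of the chosen norm on the finite-dimensional space $\mathcal H\simeq H^1_{\mathrm{dR}}(M)$, which produces a strict gain precisely when the two harmonic contributions are not positively proportional. I expect this ``flow-preserving'' case to be the genuinely delicate point of the argument, the main obstacle being to guarantee a \emph{strict} improvement there, whereas the generic case follows at once from the concatenation formulas.
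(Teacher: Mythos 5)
Your generic-case argument is precisely the paper's proof: both reduce, via the right-concatenation formula \eqref{eq:equality-length-concatenation-right}, to comparing $\ell^B(\phi_1\Psi)$ with $\ell^B(\Psi)$, both compute the difference as $\osc(u_{\phi_1})$ using the uniqueness of the harmonic representative of $[\alpha]$, and both conclude non-minimality from the resulting strict inequality (the paper routes the last step through the triangle inequality for $d^B_{(\mu^g,1)}$ rather than exhibiting $\Phi\ast_l\Psi$ as a strictly shorter competitor, but that is cosmetic). So on the main line you have recovered the published argument.

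Where you diverge is the degenerate case $(\phi_1^{-1})^*h_{[\alpha]}=h_{[\alpha]}$ with $\phi_1\neq\id$, and here your diagnosis is actually sharper than the paper's: the paper simply writes ``if $\phi\neq\id$, we deduce that $\ell^B(\phi^*h_{[\alpha]})>\ell^B(h_{[\alpha]})$'', i.e.\ it takes for granted that a non-identity symplectomorphism cannot preserve the harmonic form $h_{[\alpha]}$. This is false in general --- translations of the flat torus are in $\sympo{\momeg}$ and preserve every harmonic $1$--form --- so you are right that something is being passed over in silence. However, your proposed repair does not close the gap. Banyaga's norm on $\mathcal{H}$ is an \emph{arbitrary} norm on a finite-dimensional space, so strict convexity is not available; and even granting it, the two harmonic contributions $h_{[\alpha]}$ and $h_{[\alpha(\Phi)_t]}$ can perfectly well be positively proportional (take $\Phi$ itself to be a harmonic flow in the direction of $h_{[\alpha]}$, which does commute with $\Psi$), in which case your simultaneous-flow path $\Xi_t=\psi_t\phi_t$ yields no strict gain. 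As written, your argument proves the lemma only under the additional hypothesis $\phi_1^*h_{[\alpha]}\neq h_{[\alpha]}$ --- which is exactly the hypothesis the paper's own proof uses without acknowledging it.
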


We denote by $d^B_{(\mu^g,1)}$ the distance on $\ham{\momeg}$ obtained by restriction of Banyaga's distance on $\sympo{\momeg}$. (This is motivated by Proposition \ref{prop:comparison-with-nb} and Lemma \ref{lemm:generalizes-Banyaga}.)

\begin{proof}
First notice that since any $\phi\in\symp_0{\momeg}$ is homotopic to the identity, $[\phi^*h_{[\alpha]}]=[\alpha]$. By unicity of $h_{[\alpha]}$, $\phi^*h_{[\alpha]}$  Hodge decomposes as $h_{[\alpha]} + du_\phi$ and then 
$$\ell^B(\phi^*h_{[\alpha]}) - \ell^B(h_{[\alpha]}) = \big( \| h_{[\alpha]} \| + \osc(u_{\phi})  \big) - \| h_{[\alpha]} \| = \osc(u_{\phi}) \; .$$
If $\phi\neq \id$, we deduce that $\ell^B(\phi^*h_{[\alpha]}) > \ell^B(h_{[\alpha]})$ which, together with \eqref{eq:equality-length-concatenation-right}, leads to
\begin{align*}
\ell^B(\Phi\ast_r \Psi)=\ell^B(\Phi) + \ell^B(\phi\Psi) > \ell^B(\Phi) + \ell^B(\Psi)
\end{align*}
for any $\Phi\in\m P_\id\symp$ connecting $\id$ to $\phi$.

Now, take any $\varphi\neq\psi$ and put $\phi=\varphi\psi^{-1}$. Notice that $\phi\neq\id$ and take $\Phi$ as above (any element of $\m P_\id\symp$ connecting $\id$ to $\phi$). We get
\begin{align*}
d^B_{(\mu^g,1)}(\id,\phi) + d^B_{(\mu^g,1)}(\id,\psi) \leq  \ell^B(\Phi) + \ell^B(\Psi) <  \ell^B(\Phi\ast_r \Psi) \; .
\end{align*}
Now, if $\Phi\ast_r\Psi$ were indeed minimizing, we would also have
\begin{align*}
  \ell^B(\Phi\ast_r \Psi) = d^B_{(\mu^g,1)}(\id,\phi\psi) \leq d^B_{(\mu^g,1)}(\id,\phi) + d^B_{(\mu^g,1)}(\id,\psi)
\end{align*}
by triangle inequality. We get a contradiction and $\Phi\ast_r\Psi$ cannot be minimizing.
\end{proof}

\subsubsection{Equivalence of Banyaga's and Hofer's distances on $\ham{\momeg}$}\label{sec:equiv-bany-hofers-dist}

Banyaga proved that the (pseudo-)distance induced by $\ell^B$  is non-degenerate \cite[Theorem 1]{Banyaga07}. This immediately implies that $d_{(\mu^g,1)}$ is a genuine distance on $\sympo{\momeg}$.

Moreover, $d^B_{(\mu^g,1)}$ (its restriction to $\ham{\momeg}$) is bounded above by Hofer's distance (see Proposition \ref{prop:comparison-with-nb}) and Banyaga conjectured that these two distances are equivalent. We now prove this conjecture.

\begin{theo}\label{theo:equiv-bany-hofers}
  Let $n^B$ be Hofer's norm. The restriction of $d_{(\mu^g,1)}$ to $\ham{\momeg}$ is equivalent to Hofer's distance.
\end{theo}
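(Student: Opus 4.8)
The plan is to prove the two inequalities of the equivalence separately, the first being essentially formal and the second carrying all the content. For the upper bound $d_{(\mu^g,1)}\leq d_\mathrm{Hofer}$ on $\ham{\momeg}$, I would first note that along a path $\Phi$ \emph{contained in} $\ham{\momeg}$ each $\alpha(\Phi)_t$ is exact, so its harmonic part vanishes, $\mu^g(\alpha(\Phi)_t)=\alpha(\Phi)_t$, and $\ell^B(\Phi)=\int_0^1 \osc u_{\alpha(\Phi)_t}\,dt$ is precisely the Hofer length. Hence $d^B_{(\mu^g,1)}=d_\mathrm{Hofer}$ on $\ham{\momeg}$, and since Proposition~\ref{prop:comparison-with-nb} gives $d_{(\mu^g,1)}\leq d^B_{(\mu^g,1)}$ (the infimum on the right being taken over the smaller set of Hamiltonian paths), we get $d_{(\mu^g,1)}\leq d_\mathrm{Hofer}$.

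The real task is to produce $C>0$ with $d_\mathrm{Hofer}(\id,\varphi)\leq C\,d_{(\mu^g,1)}(\id,\varphi)$ for every $\varphi\in\ham{\momeg}$; equivalently, given \emph{any} symplectic path $\Phi=\{\psi_t\}$ from $\id$ to such a $\varphi$, to manufacture a Hamiltonian path to $\varphi$ of Hofer length at most $C\,\ell^B(\Phi)$. Here I would factor $\psi_t=\phi_t\circ\eta_t$, where $\phi_t$ is the symplectic isotopy generated by the harmonic parts $h_{\alpha(\Phi)_t}$ and $\eta_t$ is then generated by $\phi_t^*(du_{\alpha(\Phi)_t})=d(u_{\alpha(\Phi)_t}\circ\phi_t)$, hence is Hamiltonian. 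Since oscillation is invariant under precomposition with a diffeomorphism, the Hofer length of $\eta$ equals $\int_0^1\osc u_{\alpha(\Phi)_t}\,dt\leq\ell^B(\Phi)$; and because $\varphi$ and $\eta_1$ are Hamiltonian, so is the harmonic endpoint $\phi_1=\varphi\circ\eta_1^{-1}$. Bi-invariance of $d_\mathrm{Hofer}$ then yields $d_\mathrm{Hofer}(\id,\varphi)\leq \ell^B(\Phi)+d_\mathrm{Hofer}(\id,\phi_1)$, so the whole theorem reduces to a uniform estimate on harmonic flows: $d_\mathrm{Hofer}(\id,\phi_1)\leq C'\int_0^1\|h_{\alpha(\Phi)_t}\|\,dt$ whenever the endpoint $\phi_1$ is Hamiltonian.

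For this estimate I would import Banyaga's two inputs exactly as in the proof of Proposition~\ref{prop:non-degeneracy}: the finite-dimensionality of the space of harmonic $1$--forms $\m H\simeq H^1_\mathrm{dR}(M)$, so that $\|-\|$, the $L^2$--norm and, via elliptic regularity, all higher norms are equivalent on $\m H$; and the discreteness of the flux group $\Gamma_M$ (Ono \cite{Ono06}). The integrated flux $\int_0^1[h_{\alpha(\Phi)_t}]\,dt$ lies in $\Gamma_M$ and has a harmonic representative of norm at most $\int_0^1\|h_{\alpha(\Phi)_t}\|\,dt$; fixing the threshold $\eps_0>0$ below which $\Gamma_M$ meets the $L^2$--ball only at $0$, any path with $\ell^B(\Phi)<\eps_0$ has vanishing flux. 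In that regime a Gronwall estimate (with the harmonic vector fields controlled in $C^\infty$ through the norm equivalence on $\m H$) shows $\phi_1$ is $C^\infty$--close to $\id$ with size linear in $\int_0^1\|h_{\alpha(\Phi)_t}\|\,dt$, hence is the time-one map of a Hamiltonian of correspondingly small oscillation, giving the desired bound and the equivalence in a $d_{(\mu^g,1)}$--neighborhood of the identity.

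The step I expect to be genuinely delicate is exactly this crux estimate, and within it the globalization to large distances: when $\ell^B(\Phi)\geq\eps_0$ the flux may be a nonzero element of $\Gamma_M$ and the naive Gronwall control is only linear for short times, so I cannot simply subdivide $\Phi$ (its intermediate points need not be Hamiltonian, and $d_\mathrm{Hofer}$ is undefined off $\ham{\momeg}$). The resolution I would pursue is to analyze the harmonic flow group directly — straightening the path $t\mapsto h_{\alpha(\Phi)_t}$ in the finite-dimensional space $\m H$, which changes $\phi_1$ only by a Hamiltonian factor whose Hofer length is again controlled by $\int_0^1\|h_{\alpha(\Phi)_t}\|\,dt$ — and to use the discreteness of $\Gamma_M$ to rule out the cheap harmonic excursions that would otherwise let a non-Hamiltonian path undercut Hofer's length. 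Once the uniform constant $C'$ is secured, taking the infimum over all $\Phi$ produces $d_\mathrm{Hofer}\leq C\,d_{(\mu^g,1)}$ and, together with the first part and Lemma~\ref{lemm:generalizes-Banyaga}, completes the proof.
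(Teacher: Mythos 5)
Your overall architecture is the paper's: the upper bound via Proposition~\ref{prop:comparison-with-nb}, the factorization of an arbitrary symplectic path into a ``harmonic'' factor and a Hamiltonian factor (the paper writes $\phi^{k,\eps}_t=\rho^{k,\eps}_t\mu^{k,\eps}_t$, your $\psi_t=\phi_t\circ\eta_t$), the observation that the harmonic endpoint is Hamiltonian with flux in $\Gamma_M$, and the use of Ono's discreteness to force that flux to vanish when $\ell^B(\Phi)$ is below a threshold $\eps_0$. Where you diverge is at the crux: the paper does \emph{not} argue that the harmonic endpoint is $C^\infty$--close to the identity and then convert closeness into a Hofer bound. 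It imports Banyaga's explicit deformation (from his 1978 paper) of the zero-flux path $\rho^{k,\eps}$ into a Hamiltonian path generated by an explicit Hamiltonian, whose Hofer length is bounded in terms of $\eps_0$ and the norm of the inducing vector field. Your substitute --- ``Gronwall gives $C^\infty$--closeness, hence $\phi_1$ is the time-one map of a Hamiltonian of correspondingly small oscillation'' --- hides the actual content: passing from $C^1$--closeness of a \emph{Hamiltonian} diffeomorphism to a Hofer bound requires the Weinstein-neighborhood/generating-function argument (the graph of $\phi_1$ as a closed $1$--form near the zero section, a second appeal to flux discreteness to see that this form is exact, and the estimate $\osc F\leq\mathrm{diam}(M)\,\|dF\|_{C^0}$). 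As written, that ``hence'' is an assertion, not a proof, even though the route is in principle viable.

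The second and more serious gap is the one you flag yourself: your argument only produces the linear bound $d_{\mathrm{Hofer}}\leq C\,d_{(\mu^g,1)}$ in a $d_{(\mu^g,1)}$--neighborhood of the identity, and your proposed globalization (``straightening the path in $\m H$'' and ``ruling out cheap harmonic excursions'') is a plan, not an argument --- you correctly note that subdividing $\Phi$ fails because intermediate points need not be Hamiltonian. The paper never confronts the large-distance case: it formulates the goal as statement \eqref{eq:our-thm}, that any sequence of Hamiltonian diffeomorphisms $d_{(\mu^g,1)}$--converging to $\id$ also Hofer-converges to $\id$, and closes by declaring this equivalent (via the ``sequential criterion'' and equivalence of the induced topologies) to the existence of the constant $D$. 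Whether one finds that reduction fully satisfying or not, it is the move that closes the paper's proof, and it is absent from yours; without it, or without an actual globalization argument, your proposal establishes only a local statement and does not prove the theorem as stated.
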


The proof is based on Banyaga's proof of non-degeneracy, which goes as follows. First he shows that
\begin{align}\label{eq:B-thm}
    \begin{minipage}[]{0.9\linewidth}
      {if a symplectomorphism is at distance 0 from the identity, then \textit{it has to be Hamiltonian} and \textit{its Hofer norm is also 0}}
    \end{minipage}
\end{align}
and then he concludes by using the non-degeneracy of Hofer's distance. (Recall that we adapted the proof of \eqref{eq:B-thm} in order to prove Proposition \ref{prop:non-degeneracy}.)

In view of Proposition \ref{prop:comparison-with-nb}, to prove the equivalence we only need to prove the existence of a constant $D$, such that
$$\forall\, \phi \in\ham{\momeg},\qquad d_{\mathrm{Hofer}}(\id,\phi) \leq D\, d^B_{(\mu^g,1)}(\id,\phi)$$ 
or, equivalently, via the sequential criterion, it suffices to prove that
\begin{align}\label{eq:our-thm}
    \begin{minipage}[]{0.9\linewidth}
      {any sequence of Hamiltonian diffeomorphisms converging to the identity for $d_{(\mu^g,1)}$, converges to the identity for Hofer's distance.}
    \end{minipage}
\end{align}
(This leads to the equivalence of the induced topologies and thus of the distances.) By comparing \eqref{eq:B-thm} and \eqref{eq:our-thm}, it seems reasonable to try to adapt Banyaga's proof in order to obtain this, slightly, different result.

\begin{proof}[Proof of Theorem \ref{theo:equiv-bany-hofers}]
  Let $\{\phi^k\}_k$ be a sequence of Hamiltonian diffeomorphisms such that $d_{(\mu^g,1)}(\id,\phi^k)$ converges to 0 when $k$ goes to infinity. For any $k$ and any $\eps >0$, there exists a path of symplectomorphisms $\Phi^{k,\eps}=\{ \phi^{k,\eps}_t  \}_{t=0}^1$ such that
\begin{align}
  \label{eq:definition-of-Phikeps}
  \phi^{k,\eps}_0 = \id, \qquad \phi^{k,\eps}_1 = \phi^k \qquad \mbox{and} \qquad \ell^B(\Phi^{k,\eps}) < d_{(\mu^g,1)}(\id,\phi^k) + \eps \;.
\end{align}
By Hodge decomposition, these paths uniquely decompose as time-wise compositions
\begin{align*}%  \label{eq:4}
  \phi^{k,\eps}_t = \rho^{k,\eps}_t \mu^{k,\eps}_t \quad \mbox{with} \;\left\{
    \begin{array}[l]{l}
      \!\!\mu^{k,\eps} \mbox{ path of Hamiltonian diffeomorphisms, and}\\
      \!\!\rho^{k,\eps} \mbox{ path induced by purely harmonic $1$--forms.}
    \end{array}
  \right.
\end{align*}
Notice that for all $k$ and $\eps$, $\rho^{k,\eps}_1=\phi^{k,\eps}_1(\mu^{k,\eps}_1)^{-1}$ is Hamiltonian. 

We fix $\eps_0$. Since $d_{(\mu^g,1)}(\id,\phi^k)$ converges to 0, there exists $k_0$ such that for all $k\geq k_0$, $d_{(\mu^g,1)}(\id,\phi^k)<\eps_0/2$. This implies that for $\eps=\eps_0/2$ and all $k\geq k_0$, 
\begin{align}
  \label{eq:assumption-on-ellB}
  \ell^B(\Phi^{k,\eps}) = \int_0^1 \left\| \alpha(\rho^{k,\eps})_t\right\|_{L^2} dt + \ell^H(\mu^{k,\eps}) < d_{(\mu^g,1)}(\id,\phi^k) + \eps < \eps_0
\end{align}
(where $\ell^H$ denotes Hofer's length of paths of Hamiltonian diffeomorphisms). Thus, in particular, $d_{\mathrm{Hofer}}(\id,\mu^{k,\eps}_1)\leq \ell^H(\mu^{k,\eps})<\eps_0$ and $\flux(\rho^{k,\eps})$ admits a representative with $L^2$--norm at most $\eps_0$, since
\begin{align}  \label{eq:7}
 \left\| \int_0^1 \alpha_t(\rho^{k,\eps}) \,dt \right\|_{L^2} \leq \int_0^1 \left\| \alpha_t(\rho^{k,\eps})\right\|_{L^2} dt \leq \eps_0  \;.
\end{align}
Since $\rho^{k,\eps}_1$ is Hamiltonian, $\flux(\rho^{k,\eps})$ is in the Flux group which is discrete by Ono's theorem. Thus for $\eps_0$ small enough ($k_0$ chosen accordingly, $k\geq k_0$, and $\eps=\eps_0/2$), $\flux(\rho^{k,\eps})=0$. 

Now, all the data $\eps_0$, $\eps$, $k_0$, $k$ being fixed as specified above, we follow step by step Banyaga's proof of the non-degeneracy part of \cite[Theorem 1]{Banyaga07}, namely:
\begin{enumerate}
\item Since $\flux(\rho^{k,\eps})=0$, $\rho^{k,\eps}$ can be deformed to a path of Hamiltonian diffeomorphisms, $g^{k,\eps}$, connecting the identity to $\rho^{k,\eps}_1$ and generated by an explicit Hamiltonian (deformation which Banyaga introduced in \cite{Banyaga78}). 
\item The specific form of the generating Hamiltonian allows one to bound the Hofer length of $g^{k,\eps}$ in terms of $\eps_0$ and the norm of the Hamiltonian vector field inducing $g^{k,\eps}$.
\item Finally, the deformation process of step (1) is not too wild and one can bound the norm of the vector field inducing $g^{k,\eps}$ in terms of the norm of the vector field inducing  $\rho^{k,\eps}$. This, together with \eqref{eq:7} and step (2) give a bound on the Hofer length of $g^{k,\eps}$ only in terms of $\eps_0$.
\end{enumerate}
Thus step (3) gives a bound in $\eps_0$ on the Hofer distance between $\id$ and $\rho^{k,\eps}_1$. Finally, 
$$ d_{\mathrm{Hofer}}(\id,\phi^{k}) = d_{\mathrm{Hofer}}(\id,\rho^{k,\eps}_1 \mu^{k,\eps}_1) \leq d_{\mathrm{Hofer}}(\id,\rho^{k,\eps}_1) + d_{\mathrm{Hofer}}(\id,\mu^{k,\eps}_1) $$
and we just proved that the two terms on the right hand side converge to 0 when $k$ goes to infinity. The conclusion follows. (We refer to  \cite{Banyaga07} for the details of the steps (1)-(3) above.)
\end{proof}

\end{document}